\renewcommand{\mid}{|}
\newcommand{\cal}{\mathcal}
\newtheorem{theorem}{Theorem}[section]
\newtheorem{claim}[theorem]{Claim}
\newtheorem{lemma}[theorem]{Lemma}
\newtheorem{corollary}[theorem]{Corollary}
\newtheorem{proposition}[theorem]{Proposition}
\newcommand{\E}{\mathbb{E}}
\renewcommand{\Pr}{\mathbb{P}}
\newcommand{\Bin}{\operatorname{Bin}}
\newcommand{\Var}{\operatorname{Var}}
\newcommand{\N}{\mathbb{N}}
\newcommand{\R}{\mathbb{R}}
\newcommand{\MinSum}{\mathcal{W}_{\mathrm{MS}}}
\newcommand{\Expl}{\mathcal{W}_{\mathrm{EX}}}
\def\inlaw{\stackrel{\cal L}{=}}
\newcommand{\pathweight}{\operatorname{pw}}
\begin{document}
\begin{frontmatter}

\title{On explosions in heavy-tailed branching random~walks}
\runtitle{Explosions in heavy-tailed branching random walks}

\begin{aug}
\author[A]{\fnms{Omid} \snm{Amini}\ead[label=e1]{oamini@math.ens.fr}},
\author[B]{\fnms{Luc} \snm{Devroye}\ead[label=e2]{luc@cs.mcgill.ca}},
\author[C]{\fnms{Simon} \snm{Griffiths}\corref{}\ead[label=e3]{sgriff@impa.br}}
\and
\author[D]{\fnms{Neil} \snm{Olver}\thanksref{m1}\ead[label=e4]{olver@math.mit.edu}}
\runauthor{Amini, Devroye, Griffiths and Olver}
\affiliation{\'Ecole Normale Sup\'erieure, McGill University, IMPA and MIT}
\address[A]{O. Amini\\
CNRS---DMA\\
\'Ecole Normale Sup\'erieure\\
45 Rue d'Ulm, 75230F Paris\\
France\\
\printead{e1}}
\address[B]{L. Devroye\\
School of Computer Science\hspace*{47.8pt}\\
McGill University\\
3480 University Street\\
Montr\'eal, Qu\'ebec, H3A 2A7\\
Canada\\
\printead{e2}}
\address[C]{S. Griffiths\\
IMPA\\
Est. Dona Castorina 110\\
Jardim Bot\^anico\\
Rio de Janeiro\\
Brazil\\
\printead{e3}}
\address[D]{N. Olver\\
Department of Mathematics\\
Massachusetts Institute of Technology\\
Cambridge, Massachusetts 02139-4397\\
USA\\
\printead{e4}} %adresu isvedimo komanda gale!
\end{aug}

\thankstext{m1}{Supported in part by NSF Grant CCF1115849.}

% HISTORY:
\received{\smonth{2} \syear{2011}}
\revised{\smonth{10} \syear{2012}}

% ABSTRACT
%
\begin{abstract}
Consider a branching random walk on $\R$, with offspring distribution
$Z$ and nonnegative displacement distribution $W$. We say that
\textit{explosion} occurs if an infinite number of particles may be found
within a finite distance of the origin. In this paper, we investigate
this phenomenon when the offspring distribution $Z$ is heavy-tailed.
Under an appropriate condition, we are able to characterize the pairs
$(Z, W)$ for which explosion occurs, by demonstrating the equivalence
of explosion with a seemingly much weaker event: that the sum over
generations of the minimum displacement in each generation is finite.
Furthermore, we demonstrate that our condition on the tail is best
possible for this equivalence to occur.

We also investigate, under additional smoothness assumptions, the
behavior of $M_n$, the position of the particle in generation $n$
closest to the origin, when explosion does not occur (and hence
$\lim_{n \rightarrow\infty} M_n = \infty$).\looseness=-1
\end{abstract}

% KEYWORDS
% Pirmas kwd is didziosios raides
%
\begin{keyword}[class=AMS]
\kwd[Primary ]{60J80}
\kwd{60F20}
\kwd[; secondary ]{60C05}
\end{keyword}
\begin{keyword}
\kwd{Branching random walk}
\kwd{Galton--Watson trees}
\kwd{explosion}
\kwd{min-summability}
\kwd{speed of a Galton--Watson process}
\end{keyword}

\end{frontmatter}

%s1 #&#
\section{Introduction}\label{intro}

Our aim in this paper is to give a classification of the displacement
random variables
in heavy-tailed branching random walks in $\mathbb R$ for which
explosion---a concept we will define shortly---occurs.
Thus, consider a branching random walk on $\R$.
The process begins with a single particle at the origin; %in the first
%generation, positioned at the origin.
this particle moves to another point of $\R$ according to a
displacement distribution $W$, where it gives birth to a random number
of offspring, according to a distribution $Z$.
This procedure is then repeated: the particles in a given generation
each take a single step according to an independent copy of the same
distribution~$W$, and then give birth to the next generation.
We consider the case where $W$ is nonnegative (in which case the
process is also called an \textit{age-dependent} process; the
displacement of a particle can also be interpreted as a birthdate).
Let $\Gamma_t$ be the number of particles with displacement at most
$t$; then we say that \textit{explosion} occurs if $\Gamma_t = \infty$
for some finite $t$.

Alternatively, let $M_n$ be the displacement of the leftmost particle
in the $n$th generation.
If the process dies out and there are no particles remaining in the
$n$th generation, then define $M_n = \infty$.
Explosion is the event that $\lim_{n \rightarrow\infty} M_n <\infty$.
Note that, since $M_n$ is monotone, it has a limit.

Taking a tree view of the above process, denote by $T_{Z}$ a random
Galton--Watson tree
with offspring distribution $Z$, and let $Z_n$ be the number of
children at level $n$.
To avoid the trivial case, we assume throughout that $\Pr\{ Z= 1 \} < 1$.
Each edge of $T_{Z}$ is then independently given a weight according to
the nonnegative distribution~$W$.
The connection to the above process is that the displacement of a node
is simply the sum of the weights on the path from the root to that node.
From this perspective, which is the one we will take in this paper,
explosion is the event that there exists an infinite path for which the
sum of the weights on the path is finite.

In the process of studying the event of explosion, we first consider
the case where
the offspring distribution has finite mean. The different cases
described in the next paragraph
show that we can either trivially solve the problem or reduce to the
most interesting case of an infinite mean.

\subsection*{Reduction to the case of an infinite mean}

Consider a Galton--Watson process with offspring distribution $Z$
satisfying $0 < \E\{ Z\} < \infty$. We still assume $\Pr\{ Z= 1 \} < 1$.
Let $W$ be a weight (or displacement) distribution on the edges of the
Galton--Watson tree.

Consider first the case where $\Pr\{ W = 0 \} = 1$. In this case,
explosion is equivalent to the event
that the Galton--Watson tree is infinite, that is, the survival of the
Galton--Watson process.
In that case, if $\E\{ Z\} \le1$, there is no survival,
and if $\E\{ Z\} > 1$, there is a positive probability of
survival~\cite{AN72}. From now on we will assume that $\Pr\{ W = 0 \}
< 1$ and assume that the Galton--Watson process is supercritical.

In the case of a supercritical Galton--Watson process, under the
assumption $\E\{Z\} <\infty$, the results of Hammersley~\cite{Ham74},
Kingman~\cite{Kin75} and Biggins~\cite{Big90} show the existence of a
constant $\gamma$ such that, conditional on the nonextinction of the
process, $M_n /n$ tends to $\gamma$ almost surely. This shows that the
random variables~$M_n$, conditional on survival, behave linearly in
$n$, that is, $M_n = \gamma n+o(n)$. One consequence of the
Hammersley--Kingman--Biggins theorem is that if $\gamma> 0$, then
explosion never happens. Now define
\[
H:= \E\{Z\}\Pr\{W = 0\}.
\]
It can be shown %\footnote{A discussion can be found in~\cite{ADGO},
%where a longer version of this introduction is given.}
that $\gamma= 0$ if and only if $H\geq1$.
We consider in fact three cases: $H< 1$, $H> 1$ and $H= 1$.\vspace*{9pt}

$\bullet$ \textsc{Case I}: $H< 1$.\quad
Here, as stated above, explosion occurs with probability zero.
This can be seen more simply as follows: fix an $\varepsilon> 0$ such
that $\Pr\{W < \varepsilon\} < (\E\{Z\})^{-1}$ and mark all\vadjust{\goodbreak} edges with
weight smaller than $\varepsilon$.
Then each component in the forest of marked edges is a subcritical
Galton--Watson tree, and hence has finite size almost surely. Thus, any
infinite path must contain an infinite number of unmarked edges, and
hence cannot be an exploding path.\vspace*{9pt}

$\bullet$ \textsc{Case II}: $H> 1$.\quad
In this case, explosion happens with probability one. To see this, take
a sub-Galton--Watson tree by keeping only children for which \mbox{$W = 0$}.
This tree is supercritical and thus survives
with some positive probability $\rho$. It follows that with positive
probability,
there is an infinite path of length zero.
Since, conditional on survival, explosion is a $0$--$1$ event (for a
proof see later in this \hyperref[intro]{Introduction}),
we infer that it happens with probability one. A theorem of Dekking and
Host~\cite{DeHo91} ensures the
existence of an almost surely finite random variable $M$ such that
$M_n$ converges a.s. to $M$. Under the extra condition $\E{Z^2}
<\infty$, they determine stronger results on the
limit distribution $M$.\vspace*{9pt}

$\bullet$ \textsc{Case III}: $H= 1$.\quad
This threshold case is the most intriguing---it was already considered
in an earlier pioneering work of Bramson~\cite{Bra78} and in the work
of Dekking and Host~\cite{DeHo91}. In this case, the occurrence of
explosion is a delicately balanced event that depends upon the behavior
of the distribution of $W$ near the origin and on the distribution of $Z$.

Bramson's main theorem is the following result on the behavior of $M_n$
under the assumption that there exists a $\delta>0$ such that $\E\{
Z^{2+\delta}\} <\infty$.
For any fixed $\lambda$, define $\sigma_{\lambda,n} = p +
(1-p)e^{-\lambda^n}$ where $p=\Pr\{W=0\}<1$.
Then explosion happens if and only if there exists some $\lambda>1$
such that
$\sum_{n=1}^\infty F_W^{-1}(\sigma_{\lambda,n}) <\infty$. In the
case of no explosion, and
conditional on the survival of the branching process, the following
convergence result on the asymptotic of
$M_n$ holds. Almost surely, we have
%
%e1.1 #&#
\begin{equation}
\label{bramconv} \lim_{n\rightarrow\infty} \frac{M_n}{\sum_{k=1}^{s(n)}
F_W^{-1}(\sigma_{2,k})} =1,
\end{equation}
where $s(n) = \lceil\log\log n/\log2 \rceil$.
We refer to~\cite{DeHo91} for a generalization of Bramson's theorem to
the case of $\E\{Z^2\}<\infty$, under some extra mild
conditions.\vspace*{9pt}

Following Bramson~\cite{Bra78}, we first transform the tree $T_Z$ into
a new tree $T'$ as follows.
The roots are identical.
First consider the sub-Galton--Watson tree rooted at the root of $T_Z$
consisting only of
children (edges) that have zero weight.
This subtree is critical.
For any distribution of $Z$ satisfying the threshold condition, note
that the size $S$ of the
sub-Galton--Watson tree is a random variable $S \ge1$ with $\E\{ S \}
= \infty$.
In some cases, we know more---for example, when $\Var\{ Z\} = \sigma^2
\in(0, \infty)$,
then $\Pr\{ S \ge k \} \sim\sqrt{2/ \pi\sigma^2 k}$ as $k \to
\infty$ (see, e.g., the book of Kolchin~\cite{Kol86}).
All of the nodes in $S$ are mapped to the root of the new tree $T'$.
The children of that root in $T'$ are all the children of the mapped
nodes in $T_Z$ that did not have $W=0$.

Let $X_i$ be the number of vertices of degree $i$ in the
sub-Galton--Watson tree.
The number of children of the root of $T_Z$ is distributed as
\[
\zeta= \sum_{i=0}^\infty\sum
_{j=1}^{X_i} \zeta_{i,j},
\]
where $\zeta_{i,1},\zeta_{i,2}, \ldots$ are i.i.d. random variables
having distribution of a random variable
$\zeta_i$. In addition, the distribution of $\zeta_i$ is given by
\[
\Pr\{\zeta_i = k\} = c_i \pmatrix{k+i \cr i} \bigl(1- \Pr
\{W=0\}\bigr)^k \Pr\{ W=0\}^i \Pr\{Z = k+i\},
\]
where $c_i$ is a normalizing constant. Note that $\sum_{i\geq0} X_i =
S$.

For each child of the root in $T'$, repeat the above collapsing
procedure.
It is easily seen that $T'$ itself is a Galton--Watson tree with
offspring distribution~$\zeta$.
The moment generating function $G_\zeta(s)$ of $\zeta$ is easily seen
to satisfy the functional equation
%
%e1.2 #&#
\begin{equation}
\label{eqfunctional} G_\zeta(s) = G_Z \bigl(\bigl(1- \Pr
\{W=0\}\bigr)s + \Pr\{W=0\}G_\zeta(s) \bigr).
\end{equation}
Furthermore, the displacement distribution is $W$ conditional
on $W > 0$.
Finally, one can verify that $\E\{ \zeta\} = \infty$.
More importantly, explosion occurs in $T_Z$
if and only if explosion happens in $T'$.
We have thus reduced the explosion question to one for
a new tree in which the expected number of children is infinite
and in which $W$ does not have an atom at zero.

Observe that the transformation described in case III
is valid whenever $W$ has an atom at the origin.
In particular, this construction can also be used to eliminate
an atom at the origin when $\Pr\{ W = 0 \} > 0$ and $\E\{ Z\} =
\infty$.
In this case, we still have $\E\{ \zeta\} = \infty$.

It follows from the above discussion that in the study of the event of
explosion, we need to consider only the (most interesting) case where
\[
\E\{ Z\} = \infty,\qquad \Pr\{ W = 0 \} = 0.
\]
All our results below are concerned only with this case.

\subsection*{A simple necessary condition for explosion}
There is a rather obvious necessary condition for explosion.
Let $Y_i$ be the minimum weight edge at level $i$ in the tree.
Then the sum of weights along any infinite path is certainly at least
$\sum_{i=1}^\infty Y_i$.
We say that a fixed weighted tree is \textit{min-summable} if this sum
is bounded; if a tree is not min-summable, it cannot have an exploding path.

For any fixed, infinite, rooted tree $T$, and distribution $W$ on the
nonnegative reals, let $T^W$ denote a random weighted tree obtained by
weighting each edge with an independent copy of $W$.
For a fixed tree $T$ and weight distribution $W$, it follows easily
from Kolmogorov's $0$--$1$ law that explosion and min-summability of
$T^W$ are both $0$--$1$ events.
Thus, we make the following definitions.
%
%de1.1 #&#
\begin{definition}
For any infinite rooted tree $T$:
\begin{longlist}
\item let $\Expl(T)$ be the set of weight distributions so that $T^W$
contains an exploding path almost surely, and

\item let $\MinSum(T)$ be the set of weight distributions so that
$T^W$ is min-summable almost surely.
\end{longlist}
\end{definition}

In this new notation, the observation above is simply that $\Expl(T)
\subseteq\MinSum(T)$, for any tree $T$.
Unsurprisingly, in general, $\Expl(T)$ may be strictly contained
within $\MinSum(T)$.
For example, consider an infinite binary tree $T$ and a uniform weight
distribution $W$ on $[0,1]$.
Except with probability at most $\exp(-2^{i/2})$, the minimum of
$2^{i}$ copies of $W$ is at most $2^{-i/2}$.
Thus, with positive probability $\sum_{i\ge1}Y_{i}\le\sum_{i\ge
1}2^{-i/2}<3$, and so $W\in\MinSum(Z)$.
On the other hand, we may easily prove that $W\notin\Expl(Z)$, that is,
that the probability that there exists an exploding path is zero. To
see this, consider the event $A_{i}$ that
there exists a path from the root to level $i$ of weight less than $i/128$.
The existence of an exploding path certainly implies that for all
sufficiently large $i$, $A_i$ occurs.
We now observe that $\Pr\{A_{i}\}\le2^{-i}$. Indeed, the event
$A_{i}$ implies that there is a path\vspace*{1pt} from the
root to level $i$ at least half of whose edges have weight less than
$\frac1{64}$.
Since there are only $2^{i}$ paths to level $i$ and at most $2^{i}$
ways to choose a subset of the edges of a fixed path,
and since for each path and each fixed subset of at least $\frac i2$
edges the probability that all these edges
have weight less than $\frac1{64}$ is at most $8^{-i}$, the bound
easily follows. The same proof shows that
for the exponential distribution $E$, no explosion can happen [however,
$E \in\MinSum(T)$; this follows from example
(iv) of Section~\ref{secminsummability}].

\subsection*{Main results}
It may appear that, aside from some trivial cases, $\MinSum(T)$ should
always strictly contain $\Expl(T)$.
However, somewhat counterintuitively, this is not the case;
there are examples of trees with generation sizes growing very fast
(double exponentially)
for which $\Expl(T)=\MinSum(T)$. Consider, for example, the tree $T$
defined as follows:
all nodes of generation $n$ have $2^{2^{n}}$ children. In this case,
for a given weight
distribution $W$, the distribution of the sum of minimum weights of
levels is
\[
\sum_{n\ge1} \min_{ 1\leq i \leq2^{(2^n-1)}}
W_n^i,
\]
where each $W_{n}^{i}$ is an independent copy of $W$.
Also, the path constructed by the simple greedy algorithm, which,
starting from root,
adds at each step the lowest weight edge from the current node to one
of its children,
has total weight distributed as
\[
\sum_{n\ge1} \min_{ 1\leq i \leq2^{2^{(n-1)}}}
W_n^i.
\]
The property of these sums being finite almost surely is clearly equivalent,
so that $\Expl(T)=\MinSum(T)$.
Our main result is that this phenomenon is in fact quite general
in trees obtained by a Galton--Watson process with a heavy-tailed
offspring distribution.
We call the distribution $Z$ \textit{plump} if for some positive
constant $\varepsilon$ the inequality
%
%e1.3 #&#
\begin{equation}
\label{eqcondition}
\Pr\bigl\{Z\ge m^{1+\varepsilon}\bigr\}\ge\frac{1}{m}
\end{equation}
holds for all $m$ sufficiently large.
Equivalently, $Z$ is plump if its distribution function $F_{Z}$
satisfies $F_Z^{-1}(1-1/m) \geq m^{1+\varepsilon}$ for $m$ sufficiently
large. We remark that $\E{Z}=\infty$ for any plump $Z$.
\begin{mainthm*}
Let $Z$ be a plump distribution.
Let $T$ be a random Galton--Watson tree with offspring distribution $Z$,
but conditioned on survival.
Then
\[
\Expl(T) = \MinSum(T) \qquad\mbox{with probability } 1.
\]
\end{mainthm*}

We now state a second form of the Equivalence theorem. For this, we
must extend the definition
of $\Expl$ and $\MinSum$ to Galton--Watson offspring distributions.
Let $Z$ be an offspring distribution and $W$ a weight distribution.
We have the following:
%
%cl1.2 #&#
\begin{claim}
For a given offspring distribution $Z$ and weight distribution~$W$, and
conditioning on survival of the Galton--Watson process, explosion and
min-summability are $0$--$1$ events.
\end{claim}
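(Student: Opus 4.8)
The plan is to work with the \emph{unconditional} probability rather than the conditional one. Write $E$ for either of the two properties (``contains an exploding path'' or ``is min-summable''), applied to weighted trees; here $T$ is a Galton--Watson tree with offspring distribution $\gwd$ and $W$ is the weight distribution, so $T^W$ is a random weighted tree. Set $a := \Pr\{E(T^W)\}$ with $T$ \emph{unconditioned}, and $\rho := \Pr\{T \text{ survives}\} = 1-q$ with $q$ the extinction probability. The goal is to show $a \in \{0,\rho\}$; the Claim then follows immediately, since if $a=0$ the conditional probability of $E$ given survival is $0$, while if $a = \rho$ then --- using that $E$ entails survival (an exploding path is infinite; a finite tree is vacuously not min-summable, the minimum over an empty level being $+\infty$) --- the event $\{\text{survival}\}\setminus E$ has probability $\rho - a = 0$, so $E$ holds almost surely on survival.

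The engine is branching self-similarity. Let $T_1, \dots, T_\kappa$ be the subtrees hanging from the children $v_1,\dots,v_\kappa$ of the root, each carrying its own independent edge-weights, so that each $(T_j,\text{weights})$ is an independent copy of $T^W$, and let the edges $(\text{root},v_j)$ carry further independent $W$-weights. The key observation is
\[
E(T^W) \ \supseteq\ \bigcup_{j=1}^{\kappa} E(T_j) .
\]
For explosion this is an equality: an infinite path of finite total weight from the root uses exactly one $v_j$ and then continues inside $T_j$, and since the weight of $(\text{root},v_j)$ is finite a.s., this happens iff $T_j$ itself has an exploding path. For min-summability one gets only the displayed inclusion, which is still enough: with $Y_i(\cdot)$ the minimum edge-weight in level $i$, one has $\sum_{i\ge1} Y_i(T^W) = Y_1(T^W) + \sum_{i\ge1}\min_{1\le j\le\kappa} Y_i(T_j) \le Y_1(T^W) + \sum_{i\ge1} Y_i(T_j)$ for every fixed $j$, and $Y_1(T^W)<\infty$ a.s. Conditioning on the number of children of the root and using that the events $E(T_j)$ are then i.i.d.\ with probability $a$, the inclusion gives $a \ge 1 - G_\gwd(1-a)$; writing $s := 1-a$, this reads $G_\gwd(s) \ge s$. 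On the other hand $E \subseteq \{\text{survival}\}$ gives $a \le \rho$, i.e.\ $s \ge q$. Since $\E\gwd = \infty$ forces $\Pr\{\gwd \ge 2\} > 0$, the generating function $G_\gwd$ is strictly convex, and with $G_\gwd(q) = q$, $G_\gwd(1) = 1$ this yields $G_\gwd(s) \le s$ throughout $[q,1]$, with equality only at the endpoints. Hence $G_\gwd(s) = s$, so $s \in \{q, 1\}$ and $a = 1 - s \in \{\rho, 0\}$, as required.

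The step requiring the most care is the self-similarity reduction for min-summability, where the inclusion is genuinely one-sided: the per-level minimum of $T$ can be attained in different subtrees $T_j$ at different levels, so min-summability of $T$ need not imply it for any single $T_j$, and no exact recursion is available. What saves the argument is that the one-sided bound $G_\gwd(s) \ge s$ already confines $s$ to $[0,q]\cup\{1\}$, and intersecting with the trivial consequence $s \ge q$ of $a \le \rho$ leaves only $\{q,1\}$. The remaining ingredients are routine: $q < 1$ (supercriticality, automatic from $\E\gwd=\infty$), the fact that $q$ and $1$ are the only fixed points of $G_\gwd$ in $[0,1]$, and the convention $\min\emptyset = +\infty$ ruling finite trees out of min-summability. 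Note that this route bypasses the fixed-tree $0$-$1$ law recalled earlier, working directly with the number $a$.
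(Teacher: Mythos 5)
Your argument is correct and takes a genuinely different route from the paper's. The paper encodes a sequence of Galton--Watson trees via the i.i.d.\ sequence $((X_i,W_i))_{i\ge1}$ coming from a breadth-first random-walk representation, notes that since $\E Z>1$ exactly one of these trees is a.s.\ infinite and distributed as $T_\gwd$ conditioned on survival, and then observes that explosion and min-summability of that tree are tail events with respect to the i.i.d.\ sequence, so Kolmogorov's $0$-$1$ law applies. You instead work with the unconditional probability $a=\Pr\{E(T^W)\}$ and exploit branching self-similarity at the root: the one-sided inclusion $E(T^W)\supseteq\bigcup_j E(T_j)$ (an equality for explosion, a genuine containment for min-summability, as you correctly flag) yields $G_\gwd(1-a)\ge 1-a$, which together with $a\le\rho$ (forced once one adopts $\min\emptyset=+\infty$ so that finite trees are not min-summable) and the strict convexity of $G_\gwd$ pins $a$ down to $\{0,\rho\}$. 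Your approach is more self-contained and elementary---it bypasses both the $0$-$1$ law and the earlier fixed-tree observation---and it makes explicit where supercriticality (automatic here from $\E\gwd=\infty$) enters via the fixed-point structure of the generating function; the observation that the min-summability recursion need only be one-sided is a nice insight that makes the whole thing go through. The paper's proof is shorter once Kolmogorov's law is taken for granted, though identifying the two events as tail events of that encoding deserves a moment's thought. Both are valid.
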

\begin{pf}
Let $(W_i)_{i=1}^\infty$ be a sequence of independent copies of $W$,
let $(S_i)_{i=1}^\infty$ be a random walk with jump distribution given
by $Z-1$, and let $(X_i)_{i=1}^{\infty}$ be the increments.
In the usual way, this random walk can be thought of as representing
(in breadth-first fashion) a sequence of one or more Galton--Watson
trees, with $X_i+1$ giving the number of children at step $i$ and $W_i$
the weight of the $i$th edge.
Since $\E Z > 1$, one of these trees $T'$ will be infinite with
probability 1, and this tree is exactly a Galton--Watson tree
conditioned on survival.
The sequence $((X_i, W_i))_{i=1}^{\infty}$ clearly encodes all the
information about $T'$, and the two events under consideration are tail
events with respect to this sequence;
thus, Kolmogorov's $0$--$1$ law applies. The same argument holds for
min-summability.
\end{pf}
We can thus define $\Expl(Z)$ and $\MinSum(Z)$ for an offspring
distribution $Z$ as follows:
\[
\Expl(Z) := \bigl\{W | W \in\Expl(T_Z) \mbox{ almost surely
conditioned on survival} \bigr\}
\]
and
\[
\MinSum(Z) := \bigl\{W | W \in\MinSum(T_Z) \mbox{ almost surely
conditioned on survival} \bigr\}.\vadjust{\goodbreak}
\]
The alternative (though slightly weaker) formulation of the Equivalence
theorem can now be stated as follows:
\begin{mainthmalt*}
$\!\!$For a plump distribution~$Z$,
\[
\Expl(Z) = \MinSum(Z).
\]
\end{mainthmalt*}

Min-summability is clearly a simpler kind of condition than explosion;
in particular,
it depends only on the generation sizes $Z_n$ rather than the full
structure of the tree $T_Z$.
Indeed, the Equivalence theorem becomes more interesting if one
observes that it is possible to derive the following
quite explicit necessary and sufficient condition for min-summability.
%
%th1.3 #&#
\begin{theorem}\label{thmcomputableequivalence}
Given a plump offspring distribution $Z$,
let $m_0 > 1$ be large enough such that the condition (\ref
{eqcondition}) holds for all $m \geq m_0$.
Define the function $h\dvtx  \N\rightarrow\mathbb{R}^+$ as follows:
%
%e1.4 #&#
\begin{equation}\label{eqdeff}
h(0) = m_0 \quad\mbox{and}\quad h(n+1) = F_{Z}^{-1}
\bigl(1-1/h(n)\bigr) \qquad\mbox{for all } n \geq1.
\end{equation}
Then for any weight distribution $W$, $W \in\MinSum(Z)$ and, hence,
also $W \in\Expl(Z)$, if and only if
\[
\sum_n F_W^{-1}
\bigl(h(n)^{-1}\bigr) < \infty.
\]
\end{theorem}

Given the Equivalence theorem above, one may wonder if there is a way
to weaken the condition given in (\ref{eqcondition}) such that the
theorem still remains valid.
We show that this condition is to some extent the best we can ask for.
More precisely, we prove the following:
\begin{sharpness*} Let $g\dvtx \N\to\N$ be an increasing function satisfying
\[
g(m)=m^{1+o(1)}.
\]
Then there is an offspring distribution $Z$ satisfying $\Pr\{Z\geq
g(m)\}\ge1/m$ for all $m\in\mathbb N$, but for which $\Expl(Z)\neq
\MinSum(Z)$.
\end{sharpness*}

So far our results concerned the appearance of the event of explosion,
however, it is also natural to ask
how fast $M_n$ tends to infinity in the case there is a.s. no exploding path.
Although there is no reason to expect a convergence theorem in the case
of no explosion for general
plump distributions in the absence of any smoothness condition on the
tails of $Z$,
we show that a stronger plumpness property allows to obtain precise
information on the rate of convergence to infinity of~$M_n$.
To explain this, note that the plumpness assumption on $Z$ is
equivalent to $1-F_Z(k) \geq k^{-\eta}$\vadjust{\goodbreak}
for $\eta= \frac1{1+\varepsilon}$ and for all $k$ sufficiently large.
Consider now the stronger smoothness
condition
%
%e1.5 #&#
\begin{equation}
\label{eqsmooth} 1-F_Z(k) = k^{-\eta}\ell(k),
\end{equation}
where $\ell$ is any continuous and bounded function which is nonzero
at infinity.

\begin{limittheorem*} Let $Z$ satisfy the smoothness condition, and let
$W$ be any weight distribution with $W \notin\Expl(Z)$. Then a.s.
conditional on survival,
\[
\lim_{n\rightarrow\infty} \frac{M_n}{\sum_{k=1}^{n} F_W^{-1}
(\exp(-(1+\varepsilon)^k ) )}=1
\]
for all $\varepsilon> 0$.
\end{limittheorem*}

Applying a Tauberian theorem (see Section~\ref{secconvergence} for
more details), we find that
condition (\ref{eqsmooth}) is equivalent to
the condition
\[
K_Z(s):=1-G_Z(1-s) \sim a s^\eta\ell
\biggl(\frac1{s}\biggr)
\]
near $s=0$ for some $a>0$; recall $G_Z$ is the moment generating
function of~$Z$. Going back to case III of the finite mean case and the
transformation described there, we observe that the use of the
functional equation (\ref{eqfunctional}) allows to translate the
smoothness condition above, imposed on the modified offspring
distribution $\zeta$ of infinite mean (obtained after the
transformation), to a smoothness condition on~$Z$, the original
distribution of finite mean. In particular,
\[
K_{\zeta}(s)=1-G_\zeta(1-s) \sim a s^{1/(1+\varepsilon)} \bigl(1+O
\bigl(s^\beta\bigr) \bigr) \qquad\mbox{for $s$ near zero}
\]
for some $a, \varepsilon, \beta>0$ is equivalent to a condition of the form
%
%e1.6 #&#
\begin{equation}
\label{eqtails} K_{ Z}(s) \sim\E\{Z\} s -c s^{1+\varepsilon} \bigl(1 +
O\bigl(s^\delta\bigr)\bigr) \qquad\mbox{for $s$ near zero}
\end{equation}
for some $c,\delta>0$. We note that condition (\ref{eqtails})
assumes some regularity on the tails of $Z$ but the variance could be
infinite, thus, the above result can be regarded as a strengthening of
Bramson's theorem~\cite{Bra78}.

\subsection*{Further related work}

The literature on explosion
is partially surveyed by Vatutin and Zubkov~\cite{VZ93}. The early
work deals with exponentially distributed weights:
in this case, there is no explosion almost surely if and only if
\[
\sum_{n=1}^\infty{ 1 \over n \sum_{r=0}^n \Pr\{ Z > r \} } <
\infty
\]
(see~\cite{Har63}, Section V. 6,~\cite{Schu82,Do84}).
This condition cannot be simplified; Grey~\cite{Gr89} showed
that there does not exist any fixed function $\psi\ge0$ such that explosion
would be equivalent to $\E\{ \psi(Z) \} = \infty$.\vadjust{\goodbreak}

Some general properties of the event of explosion
were obtained in~\cite{Sev67} by considering the generating functions
of the number
of particles born before time $t$, parametrized by $t$, and looking at
the nonlinear
integral equation satisfied by these generating functions. By using this
analytic approach and under some smoothness conditions on the distribution
function $F_W$ of the displacement $W$, Sevast'yanov \cite
{Sev67,Sev70}, Gel'fond~\cite{Gel67}
and Vatutin~\cite{Vat76,Vat87} obtain necessary and sufficient
conditions on the event
of explosion. The result of Vatutin~\cite{Vat87} can be stated as
follows. Consider the case $\Pr\{ W=0 \} = 0$ and suppose that zero is
an accumulation point of $W$, that is,
the distribution function $F_W$ of $W$ satisfies $F_W (w) > 0$ for all
$w > 0$.
%Let $F_W^\inv$ be the inverse of $F_W$.
Assume the following regular variation style condition holds:
there exists $\lambda\in(0,1)$ such that
%
%e1.7 #&#
\begin{equation}
\label{eqregular} 0 < \liminf_{t \downarrow0} {F_W^{-1} (\lambda t)
\over F_W^{-1} (t)
} \le
\limsup_{t \downarrow0} {F_W^{-1} (\lambda t) \over F_W^{-1} (t) } < 1.
\end{equation}
Then explosion does not occur if and only if for all
$\varepsilon> 0$,
%
%e1.8 #&#
\begin{equation}
\label{eqintcrit} \int_0^\varepsilon{F_W^{-1}
\biggl(\frac{s}{K_Z(s)}} \biggr) \,\frac
{ds}{s} = \infty.
\end{equation}
Condition (\ref{eqregular}) basically forces $F_W$ to
behave in a polynomial manner near the origin.
Indeed, if $F_W(w) \sim w^\alpha$ for some $\alpha> 0$ as $w
\downarrow0$,
then $F_W^{-1} (t) \sim t^{1/\alpha}$
as $t \downarrow0$, and so (\ref{eqregular}) holds. The exponential
law corresponds to $\alpha=1$, for example.
The criterion given by (\ref{eqintcrit}) was earlier proved to be
necessary and sufficient
for nonexplosion by Sevast'yanov~\cite{Sev67,Sev70} and
Gel'fond~\cite{Gel67}
under the slightly more
restrictive condition that $F_W (w)/w^\alpha\in[a,b]$ for all~$w$,
where $0 < a \le b < \infty$ and $\alpha\ge0$.
As soon as we leave that polynomial oasis, Vatutin's condition is violated.
Examples include $F_W (w) \sim\exp(-1/w^\alpha)$
and $F_W(w) \sim1 / \log^\alpha(1/w)$ for $\alpha> 0$.

A quite general sufficient (but not necessary) condition without any explicit
regularity assumption on $W$ was proved by Vatutin~\cite{Vat96} for
explosion in nonhomogenous branching
random walks.
In the homogenous case, the result states that if there exists a
sequence of nonnegative reals $(y_n)_{n\in\mathbb N}$
such that $\lim_n y_n = 0$ and
\[
\sum_{n=1}^\infty F_W^{-1}
\bigl(y_n/K_{Z_n}(y_n) \bigr) < \infty,
\]
then explosion occurs. This result is close in spirit to our
Equivalence theorem, but we stress that the results are distinct---we
see no way in which one may be deduced from the other.

More precise information on the behavior and convergence to infinity of
$M_n$ can be obtained in the finite mean case and under extra
conditions. Recall that in the finite mean case, $M_n = \gamma n +
o(n)$ for some $\gamma\geq0$.
McDiarmid showed in~\cite{McD95} that $M_n -\gamma n = O(\log n)$ if
$\E\{Z^2\} <\infty$ and $W$ has an exponential upper\vadjust{\goodbreak} tail. Recently,
Hu and Shi~\cite{HuSh09}
proved that if the displacements are bounded and \mbox{$\E\{Z^{1+\varepsilon
}\} <\infty$} for any $\varepsilon>0$,
then, conditional on survival, $(M_n-\gamma n)/\log n$ converges in
probability but, interestingly,
not almost surely. (We note in passing that this work and the recent
work of A\"idekon and Shi~\cite{AiSh11} provide
Seneta--Heyde norming results~\cite{BiKy97} in the boundary case.)
Under the extra assumption that $Z$ is bounded,
Addario-Berry and Reed~\cite{AdRe09} calculate $\E\{M_n\}$ to within
$O(1)$ and
prove exponential tail bounds for $\Pr\{|M_n-\E\{M_n\}| > x\}$.
Extending these results, A\"{i}dekon~\cite{Ai11} proves the
convergence of $M_n$ centered around its median for a large
class of branching random walks. For tightness results in general,
under some extra assumptions on
the decay of the tail distribution or weight distribution, see
Bachmann~\cite{Bac00}
and Bramson and Zeitouni~\cite{BrZe09,BrZe07}.

\subsection*{Organization of the paper}
Section~\ref{secprel} will concern some preliminaries, mostly
involving what we call the
\textit{speed} of an offspring distribution. In Section~\ref{secproof},
we prove the Equivalence theorem. The proof is somewhat algorithmic in nature
and shows that a certain (infinite) algorithm will always find an
exploding path
under the given conditions. In Section~\ref{secminsummability}, we
prove Theorem~\ref{thmcomputableequivalence} and give some examples
calculating the condition for specific cases.
In Section~\ref{seccounterexample} we provide a generic counterexample
that shows that the equivalence does not hold if we weaken the
conditions in any substantial way,
proving the sharpness of condition (\ref{eqcondition}). Finally, in
Section~\ref{secconvergence} we prove the limit theorem under
condition (\ref{eqsmooth}).

%s2 #&#
\section{Preliminaries}\label{secprel}
In this section we present some definitions and results needed for the
proof of the Equivalence theorem.
That theorem (in its second form) is concerned with the equivalence of
$\MinSum(Z)$ and $\Expl(Z)$ for certain offspring distributions $Z$.
Thus, it will be important to have a good characterization of whether a
weight distribution $W$ belongs to $\MinSum(Z)$,
in other words, whether $\sum_{n \geq1} \min\{{W}_n^1,\ldots,
{W}_n^{Z_n}\}$ is finite,
each $W_n^i$ being an independent copy of $W$.
To do this, we will introduce two notions.
The first is the concept of the \textit{speed} of a branching process,
from which we will obtain an understanding of the growth of the
generation sizes $Z_n$.
The second is the concept of \textit{summability with respect to an
integer sequence}, which concerns the
behavior of sums of the form $\sum_{n\ge1}\min\{{W}_n^1,\ldots,
{W}_n^{\sigma_n}\}$ for a given integer sequence $(\sigma_{n})_{n\in
\N}$.

\subsection*{Speed of a Galton--Watson branching process}

We introduce the concept immediately and then give a number of examples.
%
%de2.1 #&#
\begin{definition}\label{defspeed}
An increasing function $f\dvtx  \N\rightarrow\mathbb{R}^+$, taking only
strictly positive values, is called a \textit{speed} of a Galton--Watson
offspring distribution $Z$ if there exist positive integers $a$ and $b$
such that with positive probability
\[
Z_{n/a } \leq f(n) \leq Z_{ bn } \qquad\mbox{for all } n\in\N.
\]
(Here, we set $Z_x = Z_{\lfloor x \rfloor}$ for $x \in\R$.)\vadjust{\goodbreak}
\end{definition}
Note that there is a small issue of extinction here, and that is why we
insist that $f$ is strictly positive, otherwise $f(n)=0$ would be a
speed for any distribution with $\Pr\{Z= 0\} > 0$.

\subsection*{Examples of speeds} Here we give examples of speeds for
various distributions~$Z$:
\begin{longlist}
\item
If $\E\{Z\}\le1$, then almost surely $Z_{n}=0$ for all sufficiently
large $n$, and so $Z$ does not have a speed.
\item
If $\E\{Z\}=m\in(1,\infty)$, then Doob's limit law states that the
random variables $V_{n}=Z_{n}/m^{n}$ form a martingale sequence with
$\E{V_{n}}\equiv1$, and $V_{n}\to V$ almost surely, where $V$ is a
nonnegative random variable. Furthermore, in the case that $Z$ is
bounded, the limit random variable $V$ has mean $1$ (and so, in
particular, $\Pr\{V\ge1\}> 0$). From this we may easily verify that
$m^{n}$ is a speed of $Z$. Indeed, Doob's limit law implies that the
inequality $Z_n \le(M+1) m^n$
holds for all $n$ large enough, with probability at least $P(V \le M)$.
Taking $M$ sufficiently large, this probability may be made arbitrarily
close to $1$. For the lower bound, one may consider a truncation $Z'$
of $Z$ such that $\E\{Z'\}\ge\sqrt{m}$. Since $Z'$ is bounded, we
deduce that in the truncated branching process associated with $Z'$
there is a positive probability that $Z'_{n}\ge m^{n/2}/2$ for all
sufficiently large $n$. Since there is a natural coupling such that
$Z_{n}\ge Z'_{n}$ for all $n$, this completes our proof that $m^{n}$ is
a speed of~$Z$.
\item
If $Z$ is defined by $\Pr\{Z\ge m+1\}=m^{-\beta}$ for each $m\ge1$,
where $\beta\in(0,1)$, then $Z$ is plump [one may take $\varepsilon
=\beta^{-1}-1$ in condition (\ref{eqcondition})] and the double
exponential function $f(n)=2^{(\beta^{-1})^{n}}$ is a speed of $Z$.
Heuristically, this follows from the fact that, conditioned on the
value of $Z_{n}$, one would expect $Z_{n+1}$ to be of the order
$Z_{n}^{\beta^{-1}}$.
A formal proof follows from Theorem~\ref{thmspeed} together with the
observation that the function
$h$ appearing in that theorem is equivalent to $f$ as a speed
[i.e., there exist $a',b'\in\mathbb{N}$ such that the inequalities
$f(\lfloor n/a'\rfloor)\le h(n)\le f(b'n)$
hold for all $n$]. Indeed, as we will explain in Section~\ref{secconvergence},
a much stronger statement holds in this case.
\item
If $Z$ is defined by $\Pr\{Z\ge m\}=1/\log_{2}{m}$ for each $m\ge2$,
then $Z$ is plump. Applying Theorem~\ref{thmspeed}, we find that the
tower function $h(n)$ defined by $h(0)=2$ and $h(n+1)=2^{h(n)}$ for
$n\ge0$ is a speed of $Z$.
\end{longlist}

\subsection*{Summable weight distributions with respect to an integer sequence}

Let $W$ be a random variable with nonnegative values. Let $\sigma
=(\sigma_n)_{n \in\N}$
be a sequence of positive integers and $W_n^j$ be a family of
independent copies of $W$ for $n,j \in\N$.
Define the sequence of minima
\[
\Lambda_n:= \min_{1\leq j\leq\sigma_n} W_n^j.
\]
The random variable $W$ is called \textit{$\sigma$-summable} if there
is a positive probability
that $\sum_n \Lambda_n$ is finite.\vadjust{\goodbreak}

Note that the event in the above definition is a $0$--$1$ event. Thus,
if $W$ is $\sigma$-summable, then $\sum_n \min_{1 \leq j \leq\sigma
_n} W_n^j$ is finite with probability one. For a characterization of
$\sigma$-summable weight distributions see Proposition \ref
{propsigma-equivalence}. Examples are given at the end of Section \ref
{secminsummability}.

We note that if $W$ is $\sigma$-summable and $\tau$-summable, then
$W$ is $\sigma\cup\tau$-summable, and if $\sigma_n \leq\tau_n$
for all $n$, $\sigma$-summability implies $\tau$-summability.
We also have the following:
%
%le2.2 #&#
\begin{lemma}\label{lemminsumseq}
Let $\sigma$ be any increasing sequence, and let $\tau$ be defined by
$\tau_n = \sigma_{\gamma n}$ for some constant $\gamma$, a positive integer.
Then $W$ is $\sigma$-summable iff it is $\tau$-summable.
\end{lemma}
\begin{pf}
Write $\sigma= \sigma^0 \cup\sigma^1 \cup\cdots\cup\sigma^{\gamma-1}$,
where $\sigma^i:= \{ \sigma_{\gamma n+i}\dvtx  n \in\N\}$.
Since $\sigma$ is increasing, if $W$ is $\sigma^i$-summable and $i <
j$, then $W$ is $\sigma^j$-summable.
So if $W$ is $\tau=\sigma^0$-summable, then it is $\sigma^i$-summable
for all $0 \leq i\leq\gamma-1$, and thus $\sigma$-summable.
The other direction follows trivially since $\tau\subseteq\sigma$.
\end{pf}

The following proposition relates the condition of the Equivalence
theorem to the notion of $\sigma$-summability under the presence of a
speed function for the Galton--Watson distribution.
%
%pr2.3 #&#
\begin{proposition}\label{propsigma-addable}
Let $W$ be a weight distribution and $Z$ an offspring distribution. %
%defining a Galton--Watson process $\{Z_i\}$.
Suppose that $f\dvtx \N\rightarrow\mathbb{R}^+$ is a speed for $Z$.
Then $W \in\MinSum(Z)$ if and only if $W$ is $\sigma$-summable for
the sequence $\sigma=(f(n))_{n\in\N}$.
\end{proposition}
\begin{pf}
Since $f$ is a speed for $Z$, the event
\[
R:= \bigl\{Z_{n/a} \leq f(n) \leq Z_{bn} \mbox{ for all }
n \bigr\}
\]
occurs with positive probability.
Let $\sigma^a$ be the sequence given by $\sigma^a_n = f(an)$, and
$\sigma^b$ the sequence defined by $\sigma^b_n = f(\lfloor n/b
\rfloor)$.
Suppose $W$ is $\sigma$-summable; then by Lemma~\ref{lemminsumseq},
$W$ is $\sigma^b$-summable.
Whenever $R$ occurs, $Z_n \geq\sigma^b_n$ for all $n$ and, hence,
$T_Z$ has the min-summability property almost surely.
Thus, $W \in\MinSum(T_Z)$ with positive probability, and hence $W \in
\MinSum(Z)$.

Conversely, if $W$ is not $\sigma$-summable, then again by Lemma \ref
{lemminsumseq},
it is not $\sigma^a$-summable.
Thus, even when conditioning on survival, $W \notin\MinSum(T_Z)$ with
positive probability, and hence $W \notin\MinSum(Z)$.
\end{pf}

\subsection*{Definition of a speed function for plump distributions $Z$}
We are now in a position to partially explain the mysterious function
$h$ defined in (\ref{eqdeff}), which recall was defined by
\[
h(0) = m_0 \quad\mbox{and}\quad h(n+1) = F_{Z}^{-1}
\bigl(1-1/h(n)\bigr).
\]

It will turn out that this function defines a speed function for the
offspring distribution $Z$ in the sense of Definition~\ref{defspeed}.
%
%th2.4 #&#
\begin{theorem}\label{thmspeed}
If the offspring distribution $Z$ is plump, then the function $h$ is a
speed of $Z$.
\end{theorem}
Although it is possible to present a proof at this stage, to avoid
redundancy, we postpone it until Section~\ref{secproof}.

It will actually be convenient in our proofs to consider a slight
variation on $h$.
Let $\alpha= (1+\varepsilon)^{-1/2}$, and define $f$ by
%
%e2.1 #&#
\begin{equation}\label{eqdefspv}
f(0) = \tilde{m}_0 \quad\mbox{and}\quad f(n+1) = F_{Z}^{-1}
\bigl(1-f(n)^{-\alpha}\bigr),
\end{equation}
where $\tilde{m}_{0}$ is the least integer such that condition (\ref
{eqcondition}) holds
with $m_{0}=\tilde{m}_{0}^{\alpha}$, and the following inequalities hold:
$\tilde{m}_{0}^{1-\alpha}\ge16(1-\alpha)^{-1}+16$ and
$\tilde{m}_{0}^{\alpha^{-1}-1}\ge4^{\lceil(\alpha
^{-1}-1)^{-1}\rceil+1}$.

The functions $h$ and $f$ are essentially equivalent as far as we are
concerned. The following lemma demonstrates their equivalence as speeds.
%
%le2.5 #&#
\begin{lemma}\label{lemcomparespeeds}
For any plump distribution $Z$, $h$ is a speed for $Z$ if and only if
$f$ is.
\end{lemma}
\begin{pf}
Since $h$ is increasing, for some constant $c$ we have $h(c)\ge\tilde
{m}_{0}=f(0)$.
Inductively, we then have $f(n) \le h(n+c)$ for all $n$.
Since $Z$ is plump, we have from the definition of $f$ that
\[
f(n+1) \geq f(n)^{\alpha(1+\varepsilon)} = f(n)^{1/\alpha}
\qquad\mbox{for any $n$}.
\]
Thus,
\[
f(n+2) = F_{Z}^{-1}\bigl(1-f(n+1)^{-\alpha}\bigr)
\geq F_{Z}^{-1}\bigl(1-f(n)^{-1}\bigr).
\]
It follows that if $f(n) \geq h(m)$, then $f(n+2) \geq h(m+1)$.
So by induction, we have $f(2n) \geq h(n)$.

Considering the definition of a speed for $Z$, we see that if one is a
speed, so is the other.
\end{pf}

In the following lemma, we state some direct consequences of
condition (\ref{eqcondition}) (i.e., the assumption $Z$ is plump) and
the definition of $f$, that will be helpful later.
%
%le2.6 #&#
\begin{lemma}\label{lemexponential}
Let $Z$ be a plump distribution and let $f(n)$ be defined as in~(\ref
{eqdefspv}).
\renewcommand\thelonglist{(\roman{longlist})}
\renewcommand\labellonglist{\thelonglist}
\begin{longlist}
\item\label{itemdoublejump} For all $n$,
%
%e2.2 #&#
\begin{equation}
\label{lemdoublejump} f(n+2) \geq F_{Z}^{-1}
\bigl(1-1/f(n)\bigr).
\end{equation}
\item\label{itemfour}\label{itemexpspeed}\label{itemsixteen}
$f(n+1) \ge4^{n+1}f(n)$ for all $n\ge0$.
In particular, $f(n)^{1-\alpha}\ge16n+16$ for all $n\ge1$,
and for any positive $r$, $f(n) = \Omega(r^n)$.
\item\label{itemjumpspeed} For each $k\ge2$ and for all $n$,
%
%e2.3 #&#
\begin{equation}
\label{lempower} f\bigl(n+ 2\bigl\lceil\log k/ \log(1+\varepsilon)\bigr
\rceil\bigr)
\geq f(n)^k.\vadjust{\goodbreak}
\end{equation}
\end{longlist}
\end{lemma}
\begin{pf}
Part~\ref{itemdoublejump} follows immediately from the proof of
Lemma~\ref{lemcomparespeeds}.
To prove part~\ref{itemfour}, we begin by noting that the ratio $f(n+1)/f(n)$
is at least $f(n)^{\alpha^{-1}-1}$, as $\alpha(1+\varepsilon)=\alpha^{-1}$.
We therefore prove that $f(n)^{\alpha^{-1}-1}\ge4^{n+1}$ for all $n$.
Let $n_{0}=\lceil(\alpha^{-1}-1)^{-1}\rceil$, and note that
since $\tilde{m}_{0}^{\alpha^{-1}-1}\ge4^{\lceil(\alpha
^{-1}-1)^{-1}\rceil+1}$,
the inequality $f(n)^{\alpha^{-1}-1}\ge4^{n+1}$ holds trivially for
$n\le n_{0}$.
For $n> n_{0}$, the result follows easily by induction as
\begin{eqnarray*}
f(n)^{\alpha^{-1}-1}&\ge&\bigl(4^{n}f(n-1)\bigr)^{\alpha^{-1}-1}=
4^{(\alpha^{-1}-1)n}f(n-1)^{\alpha^{-1}-1}\\
&\ge&4f(n-1)^{\alpha^{-1}-1}.
\end{eqnarray*}
To conclude the proof of part~\ref{itemsixteen}, we have to show
$f(n)^{1-\alpha}\ge16n+16$ for all $n$.
For $n\le(1-\alpha)^{-1}$, we trivially have
\[
f(n)^{1-\alpha}\ge f(0)^{1-\alpha}=\tilde{m}_{0}^{1-\alpha}
\ge16(1-\alpha)^{-1}+16.
\]
For $n\ge(1-\alpha)^{-1}+1$, we have $f(n)^{1-\alpha
}/f(n-1)^{1-\alpha}\ge4$, and the
result easily follows by induction.

To prove part~\ref{itemjumpspeed}, we note that
\[
f(n+2) = F_{Z}^{-1}\bigl(1-1/f(n)\bigr) \geq
f(n)^{1+\varepsilon}.
\]
An inductive argument now easily yields that
\[
f(n+2\ell) \geq f(n)^{(1+\varepsilon)^\ell}
\]
for any $n$ and $\ell$.
It follows that $f(2n) \geq m_0^{(1+\varepsilon)^n}$.
We conclude by setting $\ell= \lceil\log k / \log(1+\varepsilon)
\rceil$.
\end{pf}

%s3 #&#
\section{Proof of the Equivalence theorem}\label{secproof}

In this section we prove the Equivalence theorem.
We first prove it in the second (technically weaker) form and then
describe how the first form may be deduced.

Let $Z$ be a plump offspring distribution, and let $\varepsilon$ and
$m_{0}$ be
such that condition (\ref{eqcondition}) holds for the triple $Z,
\varepsilon$ and $m_{0}$.
Fix an arbitrary $W\in\MinSum(Z)$. We shall prove that $W\in\Expl
(Z)$ (and the theorem will follow).
We define an algorithm which selects a path in the tree in a very
precise way; then using the properties of~$W$, we
prove that with positive probability this path is an exploding path.
Since, conditioned on survival, the event that there is an exploding
path is a $0$--$1$ event,
this is enough to prove the theorem.

The algorithm\vspace*{2pt} depends on a parameter $\alpha$, defined in the previous section:
$\alpha:=(1+\varepsilon)^{-1/2}$. The reason for this choice of
exponent will be clarified later in the proof.

%{\fontsize{10pt}{11pt}\selectfont{
\begin{figure}[h]
\rule{\textwidth}{0.5pt}
\flushleft\textbf{Algorithm} \textsc{FindPath}:\\
Let $x_0$ be the root of the tree.

\textbf{For} $n=0, 1, 2, \ldots\,$:
\begin{itemize}[--]
\item[--] Consider node $x_n$, which is the lowest node in the candidate
exploding path we are constructing. Let $Y_{n+1}$ denote the number of
children of $x_n$.
\item[--] Order the children of $x_n$ by how many children they in turn have,
from largest to smallest. Let $X_{n+1}:=\lceil(Y_{n+1})^{(1-\alpha
)}/2\rceil$. We define the \textit{options} from $x_n$
to be the first $X_{n+1}$ children of $x_n$ in the ordering.
\item[--] If $X_{n+1}=0$, the algorithm terminates in failure. Otherwise,
of the $X_{n+1}$ choices, pick the option whose edge from $x_n$ has the
smallest weight, and set $x_{n+1}$ to be this child.\vspace*{-10pt}
\end{itemize}
\rule{\textwidth}{0.5pt}
\end{figure}

The analysis of the algorithm, and the proof that it provides with
positive probability an exploding path,
will be based on the following assertion.
%
%cl3.1 #&#
\begin{claim}\label{claimequiv}
There exists a positive integer $a$ such that, with positive probability,
$Z_{n}\le f(an)$ and $Y_{n}\ge f(n)$ hold simultaneously for all $n\in
\N$,
where $f$ is the function defined in equation (\ref{eqdefspv}).
\end{claim}
Indeed, given this, we may deduce immediately that with positive probability
$Z_{n/a} \leq f(n) \leq Z_{n}$ for all $n\in\N$,
implying that $f(n)$ is a speed of $Z$.
Furthermore, since $X_{n}$, the number of options of $x_{n-1}$, is
defined by
$X_{n}=\lceil Y_{n}^{(1-\alpha)}/2 \rceil$,
there is a positive probability that $X_{n}\ge f(n-\gamma)$ for all
$n\in\N$, where $\gamma=2\lceil\log{(1-\alpha)^{-1}}/\log
(1+\varepsilon)\rceil+1$
[this follows from Lemma~\ref{lemexponential}\ref{itemjumpspeed}].

We now observe that, conditional on the inequality $X_{n}\ge f(n-\gamma
)$ holding for all $n\in\N$,
the path $x_{0},x_{1},x_{2},\ldots$ is an exploding path almost surely.
The distribution of the sum of
weights along the path $x_{0},x_{1},x_{2},\ldots\,$,
dependent on $X_{1},X_{2},X_{3},\ldots\,$, is given by
\[
\sum_{n\ge1}\min\bigl\{ W_{n}^{1},\ldots,W_{n}^{X_{n}} \bigr\},
\]
where the $W_{n}^{j}$ are i.i.d. with distribution $W$.
Thus, conditional on the event that $X_{n}\ge f( n-\gamma)$ for all
$n\in\N$,
this sum is stochastically smaller than $\sum_{n\ge1}\min\{
W_{n}^{1},\ldots,W_{n}^{f(n-\gamma)} \}$.
Moreover, Lemma~\ref{lemminsumseq} implies that $W$ is $\sigma
$-summable for the sequence
$\sigma=(f(n))_{n\in\N}$, and since the contribution of any finite
number of terms is finite,
$W$ is also $\sigma$-summable for the sequence $\sigma=(f(n-\gamma
))_{n\in\N}$. This proves that $x_{0},x_{1},x_{2},\ldots$ is an
exploding path almost surely.

So it remains to prove Claim~\ref{claimequiv}, which we will do for
the choice $a=3+2\lceil\log{2}/\log(1+\varepsilon)\rceil$.

Define the two families of events $\{A_n\}_{n \geq1}$ and $\{B_n\}
_{n\geq1}$ by
\[
A_n:= \bigl\{Y_n < f(n) \bigr\},\qquad B_n:= \bigl
\{Z_n > f(an) \bigr\}.
\]
We are led to prove that there is a positive probability that none of
the events $A_{n}$ or $B_{n}$ occur.\vadjust{\goodbreak}
Let $C=A_{1}^c \cap B_{1}^{c}$. The definition of $f$ implies that $Z$
assigns a positive probability to the range $[f(1),f(a)]$, so that $\Pr
\{C\}>0$.
We will show below that
%
%e3.1 #&#
%e3.2 #&#
\begin{eqnarray}
\label{ineq1} \Pr\{A_{2}\mid C \} &\leq&1/16 \quad\mbox{and}\quad \Pr\bigl
\{A_{n+1}\mid A_{n}^{c} \bigr\}\leq4^{-n-1}\qquad
\mbox{for } n \geq2;
\\
\label{ineq2} \Pr\{B_{2}\mid C \}&\le&1/16 \quad\mbox{and}\quad \Pr\bigl
\{B_{n+1}\mid B_{n}^{c} \bigr\}\le4^{-n-1}
\qquad\mbox{for } n\ge2.
\end{eqnarray}
Assuming the above inequalities, we infer that
\begin{eqnarray*}
\Pr\biggl\{ C\cap\bigcap_{n\ge1}A_{n+1}^{c}
\biggr\} &=& \Pr\{C\} \prod_{n\ge1}\Pr\bigl\{
A_{n+1}^{c} \mid A_{n}^{c},A_{n-1}^{c},\ldots,A_{2}^{c},C \bigr\}
\\
&=& \Pr\{C\} \Pr\bigl\{ A_{2}^{c} \mid C \bigr\} \prod
_{n\ge
2} \Pr\bigl\{ A_{n+1}^{c}
\mid A_{n}^{c} \bigr\}
\\
&&\mbox{(since the sequence $Y_{1},Y_{2},Y_{3},\ldots$ is Markovian)}
\\
&\geq&\biggl(1 - \sum_{n\geq1} 4^{-n-1}
\biggr)\Pr\{C\}.
\end{eqnarray*}
In the\vspace*{1pt} same way, we obtain $\Pr\{ C\cap\bigcap_{n\ge
1}B_{n+1}^{c} \} \geq(1 - \sum_{n\geq1} 4^{-n-1}
)\Pr\{C\}$.
Since both the events $C\cap\bigcap_{n\ge1}A_{n+1}^{c}$ and $C\cap
\bigcap_{n\ge1}B_{n+1}^{c}$ are
contained\vspace*{1pt} in~$C$, we conclude that with positive probability
none of the events $A_n$ and $B_n$ occur, finishing the proof of the claim.

All that remains is to prove inequalities (\ref{ineq1}) and (\ref{ineq2}).
We first prove the bound on $\Pr\{A_{n+1} | A_{n}^{c} \}$
(it will be seen that the bound on $\Pr\{A_{2} | C \}$
follows by the same proof).
Call a child of $x_{n}$ \textit{good} if it has at least $f(n+1)$
children, and write $G_{n}$ for the number of good children of $x_{n}$.
We note that,
given~$Y_n$, the distribution of $G_{n}$ is $\Bin(Y_n,p)$, where $p$,
the probability that a given child is good, is at least
$1 - F_{Z}(f(n+1)) = f(n)^{-\alpha}$.
By the way the algorithm chooses the vertex $x_{n+1}$, we also note
that $A_{n+1}$ can occur only
if $G_n < Y_n^{1-\alpha}/2$.
Thus, conditional on $Y_n \geq f(n)$, if $A_{n+1}$ occurs, then
\[
G_{n} < Y_n^{1-\alpha}/2 \le Y_nf(n)^{-\alpha}/2
\le\E\{ G_{n}\}/2.
\]
Hence,
\begin{eqnarray*}
\Pr\bigl\{A_{n+1} \mid A_n^c \bigr\}&\le&\Pr
\biggl\{ G_{n}\le\frac{Y_n^{1-\alpha}}{2} \Big| Y_{n} \geq f(n)
\biggr\}
\\
& %TODO I think this step could be clearer.
\le&\exp\biggl(\frac{-f(n)^{1-\alpha}}{8} \biggr)
\\
&\le& \frac{1}{4^{n+1}} \qquad\mbox{[by Lemma
\ref{lemexponential}\ref{itemsixteen}]}.
\end{eqnarray*}

We now prove $\Pr\{B_{n+1}\mid B_n^c \}\le4^{-(n+1)}$ (the proof
bounding $\Pr\{B_2 \mid C\}$ being identical).
Note that by Lemma~\ref{lemexponential}\ref{itemjumpspeed},
\[
f(an+a) \geq f(an)f(an+3).
\]
Thus, in order for the event $Z_{n+1} \geq f(an+a)$ to occur,
conditional on $Z_n \leq f(an)$, there must be some node in generation
$n$ having at least \mbox{$f(an+3)$} children.
Taking $Z(i)$ to be an independent copy of $Z$ for each $i$,
the probability of this is bounded as follows:
\begin{eqnarray*}
&&
\Pr\bigl\{\max\bigl\{Z(1),\ldots,Z\bigl(f(an)\bigr)\bigr\}> f(an+3)
\bigr\}\\
&&\qquad\le
f(an) \Pr\bigl\{Z> f(an+3)\bigr\}
\\
&&\qquad\le f(an) \bigl(1-F_{Z}\bigl(f(an+3) \bigr)\bigr)
\\
&&\qquad\le f(an) f(an+1)^{-1}
\qquad\mbox{[by Lemma~\ref{lemexponential}\ref{itemdoublejump}]}
\\
&&\qquad\le\frac{1}{4^{n+1}} \qquad\mbox{[by Lemma~\ref{lemexponential}\ref{itemfour}]}.
\end{eqnarray*}

The proof of the Equivalence theorem (in its second form) is complete.
Note that in the process, we have also proved that $f$
is a speed of $Z$; thus, by Lem\-ma~\ref{lemcomparespeeds},
Theorem~\ref{thmspeed} also follows.

\subsection*{First form of the Equivalence theorem}
One might hope that the first form of the Equivalence theorem could be
deduced from the
second by some very simple reasoning, perhaps considering for each
weight distribution $W$ the set of trees $T$ for which $\Expl(T)\neq
\MinSum(T)$. However, the fact that there are uncountably many
possible weight distributions seems to be problematic for such a direct
approach.

Taking $T$ to be a random Galton--Watson tree with offspring
distribution $Z$ conditioned to survive,
we will prove that the following chain of containments holds almost surely:
\[
\MinSum(T)\subseteq\MinSum(Z)\subseteq\Expl(T).
\]
From this the Equivalence theorem in its first form immediately follows.

That the first inclusion holds almost surely
follows from the fact that the rate of growth of generation sizes of $T$
may almost surely be bounded in terms of the speed $f$ of $Z$.
Specifically, taking $a=3+2\lceil\log{2}/\log(1+\varepsilon)\rceil$
as in Claim~\ref{claimequiv}, we will show that almost surely there
exists a constant $c$ such that $Z_{n}\le f(an+c)$ for all $n$.
For $z \in\mathbb{N}$, let $r(z)$ denote the greatest $r$ for which
$z\ge f(r)$.
If no bound of the form $Z_{n}\le f(an+c)$ holds,
then there must be infinitely many $n$
for which $r(Z_{n+1})> r(Z_{n})+a$.
However, our proof of (\ref{ineq2}) demonstrates that the probability
that $Z_{n+1}\ge f(r+a)$ given that $Z_n\le f(r)$ is at most $4^{-r}$.
Since $f$ is a speed of $Z$, the sequence of probabilities
$4^{-r(Z_n)}$ is summable almost surely, and so this event has
probability zero.

That the second inclusion holds almost surely follows from the fact
that we may apply
the above algorithmic approach to finding an exploding path to any
rooted subtree of $T$
which survives. For a node $v$, let $T_{v}$ denote the subtree of its
descendants.
Denote by $s(n)$ the number of nodes of
generation $n$ for which $T_{v}$ is infinite.\vadjust{\goodbreak}
As $T$ is conditioned on survival, the function $s(n)$
is unbounded almost surely (\cite{AN72}, Chapters 10--12).
Let now $W\in\MinSum(Z)$. The above algorithm, applied independently
to each node of generation $n$ for which
$T_{v}$ is infinite, has positive probability $p>0$ of producing an
exploding path in each.
Thus, the probability of no exploding path is at most
$(1-p)^{s}$ for all $s$, and so is $0$.

\subsection*{The set of weights of infinite rooted paths} The
following theorem characterizes the set of all possible values the
weights of infinite rooted paths can take conditioned on the survival
of the Galton--Watson tree. Note that the theorem is valid in general
and does not require the plumpness condition.
%
%th3.2 #&#
\begin{theorem}
Let $Z$ be an offspring distribution and $W$ a nonnegative weight
distribution which is not a.s. zero.
Then almost surely conditioned on survival, the set of weights of
infinite rooted paths is $[A, \infty]$, where $A$ is the infimum
weight of infinite rooted paths.
\end{theorem}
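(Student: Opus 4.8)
The plan is to show that the set $\mathcal{S}$ of weights of infinite rooted paths is a closed interval of the form $[A,\infty]$, where $A := \inf \mathcal{S}$. First I would note that $\mathcal{S}$ is nonempty almost surely (conditioned on survival, there is at least one infinite path, and it has some finite or infinite weight), and that $A$, being the infimum of a nonempty set, is well-defined in $[0,\infty]$. Three things need to be established: (1) $A \in \mathcal{S}$, i.e.\ the infimum is attained; (2) $\mathcal{S}$ is upward closed, i.e.\ $\infty \in \mathcal{S}$ and every value in $(A,\infty)$ is attained; (3) $A$ is an almost sure constant (this follows from a $0$-$1$ law argument exactly as in the Claim in the introduction, since $A$ is a tail function of the sequence $((X_i,W_i))$).

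For step (1), the natural approach is a compactness argument. Fix a sequence of infinite rooted paths $P_k$ with $\pathweight(P_k) \to A$. Since $T$ is locally finite (each node has finitely many children a.s.), by König's lemma / a diagonal extraction there is an infinite rooted path $P$ that agrees with infinitely many of the $P_k$ on arbitrarily long initial segments. For each generation $m$, the weight of the length-$m$ prefix of $P$ equals that of some $P_k$ with $k$ large, hence is $\le \pathweight(P_k) \to A$; letting $m\to\infty$ gives $\pathweight(P) \le A$, so $\pathweight(P)=A$ and $A\in\mathcal{S}$. If $A=\infty$ there is nothing to prove for this step.

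For step (2), to realize a target weight $t$ with $A < t \le \infty$, the idea is to detour off an infimum-achieving path $P = (x_0,x_1,\dots)$ and pick up extra weight. Concretely, using that $W$ is not a.s.\ zero, pick $\delta>0$ with $\Pr\{W\ge\delta\}>0$; conditioned on survival one can find, along $P$, infinitely many nodes $x_{n_j}$ each having an edge of weight $\ge\delta$ leading into a surviving subtree (this uses that a positive fraction of descendants survive, as in the ``second inclusion'' argument earlier in the paper, together with Borel--Cantelli for independent trials at successive levels). Now build a new path: follow $P$ up to $x_{n_j}$, and at the last such detour point splice in an infinite path through the heavy surviving subtree whose remaining weight we control. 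More carefully, the cleanest version is: the weight of an infinite path starting at $x_n$ and lying in the subtree $T_{x_n}$ ranges over $[A_n,\infty]$ for its own infimum $A_n$, and by replacing the tail of $P$ from generation $n$ onward with such a path we can add any amount in $[A_n - \pathweight(\text{tail of }P\text{ from }x_n), \infty]$; as $n$ grows the left endpoint tends to $0$ (the discarded tail weight and $A_n$ both shrink along a minimizing path), so the union of achievable totals is all of $(A,\infty]$. Combined with step (1) this gives $\mathcal{S}=[A,\infty]$.

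The main obstacle is step (2): making the detour construction rigorous requires knowing that one can always find detours that simultaneously (a) reach into an \emph{infinite} subtree and (b) allow the accumulated extra weight to be tuned to hit \emph{exactly} $t$, not merely approximately. The tuning is handled by taking $t$ to be a limit: build a nested sequence of finite path-segments whose partial weights increase to $t$, at each stage detouring far enough down the current path that the as-yet-undetermined tail contributes at most $t - (\text{current partial weight})$ in infimum and can be made to contribute any larger amount — so a final compactness/limiting step as in (1) produces an honest infinite path of weight exactly $t$. The a.s.\ availability of heavy edges into surviving subtrees at infinitely many levels along a fixed minimizing path is the probabilistic input, and it should follow from the independence of edge weights together with the fact, used already in the paper, that conditioned on survival an unbounded number of generation-$n$ nodes have infinite descendant subtrees.
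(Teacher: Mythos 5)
Your overall three-part plan (the infimum is attained, the set is upward-dense/attains all larger values, and constancy of $A$) deviates from the paper, and two of the parts have real problems.

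\textbf{On step (3).} The assertion that $A$ is an almost-sure constant is false, and is not needed for the theorem. If you change the edge weights out of the root, you change $A$; unlike explosion or min-summability, the infimum path weight is not a tail functional of the edge process, so the $0$–$1$ argument from the Claim in the introduction does not apply. Indeed $A$ is genuinely random (e.g.\ conditioning on the first edge weight already shifts it). The theorem is stated realization-wise (``$[A,\infty]$ where $A$ is the infimum''), so this claim should simply be dropped.

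\textbf{On step (1).} The König/diagonal extraction correctly shows the infimum $A$ is attained: the diagonal path has weight $\le A$ by construction and $\ge A$ trivially. But note that exactly this trick does not prove the set $\mathcal{S}$ of achieved weights is closed at any $t>A$: if $t_k\to t$ with $t_k\in\mathcal{S}$, the same extraction only gives $\pathweight(Q)\le t$, with no matching lower bound. So the ``density plus closedness'' framework you are implicitly hoping for does not come for free from step~(1), and some extra work is needed to pass from approximate to exact target weights. The paper flags this as a ``straightforward compactness argument''; in your write-up no such step appears.

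\textbf{On step (2) — the main gap.} The detour construction is circular. To ``splice in an infinite path through the surviving subtree whose remaining weight we control,'' and more explicitly when you write that ``the weight of an infinite path starting at $x_n$ and lying in $T_{x_n}$ ranges over $[A_n,\infty]$,'' you are invoking the conclusion of the theorem, applied to a subtree $T_{x_n}$. That subtree is another Galton–Watson tree conditioned on survival, so nothing has been reduced; the nested-sequence limiting step you describe does not escape this, because at every stage the ``as-yet-undetermined tail'' has to be tunable to a prescribed residual, which is precisely the connectedness of $\mathcal{S}$ restricted to a subtree — the thing to be proved. A fixed heavy-edge threshold $\delta$ also cannot reach targets in $(A, A+\delta)$, and after the detours the residual infimum at the fresh subtree is again of order $A$, so the partial weights stall around $t-A$ and the final segment needs exactly the circular claim.

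\textbf{What the paper does instead.} The paper first reduces (via the introduction's collapsing transformation) to $\Pr\{W=0\}=0$, handles $W\notin\Expl(Z)$ trivially, and then, for $W\in\Expl(Z)$, chooses $\epsilon$ small with $\Pr\{W\in(\epsilon,2\epsilon)\}>0$ (possible because $0$ is then an accumulation point of $W$) and partitions the nodes into weight-bands $S_i=\{v:\pathweight(v)\in[i\epsilon,(i+1)\epsilon)\}$. Since a child of a node in $S_i$ lands in $S_{i+1}\cup S_{i+2}$ with a uniform positive probability, starting from the first infinite band $S_\ell$ one shows inductively that for every $j\ge\ell$ at least one of $S_j, S_{j+1}$ is infinite; from an infinite band one extracts by König an infinite rooted path whose weight lies in that band, and letting $\epsilon\to 0$ gives, for every $a\ge A$ and $\epsilon'>0$, a path with weight in $[a,a+\epsilon']$. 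This is a genuinely different mechanism: it works at the level of the whole tree, uses small edges in a fixed positive-probability window rather than a fixed heavy $\delta$, and never needs to ``aim'' at a fixed residual and hence never runs into the circularity. Your argument would need to be replaced by something along these lines, and you would also need to spell out the final passage from density to exact attainment, which step~(1) alone does not give you.
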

\begin{pf}
By applying the transformation discussed in the \hyperref[intro]{Introduction} if
necessary, we may assume that $W$ has no atom at zero. Note that
clearly the transformation does not change the weights of infinite
rooted paths.

The theorem is clearly true if $W \notin\Expl(Z)$ since in this case,
conditioned on survival, all infinite rooted paths have infinite
weight. So in the following we assume $W \in\Expl(Z)$.

By a straightforward compactness argument, it suffices to show that for
any $\varepsilon' > 0$, there exists (almost surely) an infinite path
with weight in \mbox{$[a, a + \varepsilon']$}, for all $a \geq A$. %[k\varepsilon,
%(k+1)\varepsilon)$, for all $k$ s.t. $A < (k+1)\varepsilon$.

Let $\varepsilon\leq\varepsilon'/4$ be such that $\Pr\{W \in(\varepsilon,
2\varepsilon)\} > 0$; such an $\varepsilon$ must exist since $W \in\Expl
(Z)$ and $W$ has no atom at zero.
% but leads to explosion.
Define the \textit{path-weight} $\pathweight(v)$ of a node $v$ to be
the sum of the edge weights on the path from $v$ to the root.
Now let
\[
S_i = \bigl\{ v \in T \mid\pathweight(v) \in\bigl[i\varepsilon, (i+1)
\varepsilon\bigr) \bigr\}.
\]
The choice of $\varepsilon$ is such that if $v \in S_i$, then for any
given child $w$ of $v$, $w \in S_{i+1}\cup S_{i+2}$ with a constant
positive probability. % (if $\pathweight(v) \in[i\varepsilon, (i+\frac12)
%weight less than $\varepsilon/2$).

Since explosion occurs, there is some least integer $\ell$ such that
$S_{\ell}$ is infinite; we then have $A \geq\ell\varepsilon$. We may
explore $S_0, S_1, \ldots$ in turn, each time uncovering all of~$S_i$,
as well as all children of nodes in $S_i$. In the process of exploring
$S_{\ell}$, each node we explore whose parent is in $S_{\ell}$ will
have a constant positive probability of being in $S_{\ell+1}\cup
S_{\ell+ 2}$, thus, a.s. at least one of $S_{\ell+1}$ and $S_{\ell
+2}$ is infinite too. Moreover, since explosion occurs, each such node
will have a positive probability of being the root of an infinite path
of length at most~$\varepsilon$. Thus, $S_{\ell+1} \cup S_{\ell+2} \cup
S_{\ell+3}$ must contain an infinite path a.s. Continuing inductively,
we find that a.s. for any integer $j\geq\ell$, one of the sets $S_j$
or $S_{j+1}$ should be infinite, and there is an infinite path of total
weight in $[j\varepsilon, (j+4)\varepsilon)$.\vadjust{\goodbreak}

Now choosing $j$ such that $a \in[j\varepsilon, (j+1)\varepsilon)$, we
infer the existence of an infinite path with length in the interval
$[a, a+4\varepsilon] \subseteq[a, a+\varepsilon']$.
\end{pf}

%s4 #&#
\section{Equivalent conditions for min-summability}\label{secminsummability}

In the previous section, we proved an Equivalence theorem between
explosion and min-summability for
branching processes with plump offspring distributions. Though the
existence of such a result is certainly nice in its own right,
one may wonder if the property of min-summability is in any sense
substantially simpler than that of explosion.
The aim of this section is to answer this question in the affirmative
by proving Theorem~\ref{thmcomputableequivalence}, which
provides a necessary and sufficient condition
for min-summability that involves a calculation based only on the
distributions. We then provide some examples at the end of this section.

Let $W$ be a random variable taking values in $[0, \infty)$ and let
$\sigma=(\sigma_i)_{i \ge0}$ be a sequence of positive integers.
Then we have the following:
%
%pr4.1 #&#
\begin{proposition}\label{propsigma-equivalence} The nonnegative
random variable $W$ is $\sigma$-summable
if and only if the following two conditions are satisfied:
\begin{eqnarray*}
\mbox{\hphantom{\textup{i}}\textup{(i)}}\hspace*{26pt}\quad \sum_n \bigl( \Pr\{ W > 1 \}
\bigr)^{\sigma_n} &<& \infty\quad\mbox{and}
\\
\mbox{\textup{(ii)}}\quad \sum_n \int_0^1
\bigl( \Pr\{ W > t \} \bigr)^{\sigma_n} \,dt &<& \infty.
\end{eqnarray*}
\end{proposition}
\begin{pf}
As in Section~\ref{secprel}, let $W_n^j$ be an independent copy of
$W$ for each $n,j \in\N$ and
let
\[
\Lambda_n:= \min_{1\leq j\leq\sigma_n} W_n^j.
\]
Clearly, $\Lambda_n$ is a sequence of nonnegative and independent
random variables.
By Kolmogorov's three-series theorem (see, e.g., Kallenberg \cite
{Kal97} or Petrov~\cite{Pet75}),
we have $\sum_n \Lambda_n < \infty$ almost surely if and only if
\begin{eqnarray*}
\sum_n \Pr\{ \Lambda_n > 1 \} &<&
\infty,
\\
\sum_n \E\{ \Lambda_n
\mathbf1_{[\Lambda_n \le1]} \} &<& \infty
\end{eqnarray*}
and
\[
\sum_n \Var\{ \Lambda_n
\mathbf1_{[\Lambda_n \le1]} \} < \infty.
\]
Since $W$ is nonnegative, random variables $\Lambda_n \mathbf
1_{[\Lambda_n \le1]}$ take value in $[0,1]$, and so the third
condition follows from the second one. Now,
$\Pr\{ \Lambda_n > 1 \} = ( \Pr\{ W > 1 \} )^{\sigma_n}$,
and
$\E\{ \Lambda_n \mathbf1_{[\Lambda_n \le1]} \} =
(\int_0^1 ( \Pr\{ W > t \} )^{\sigma_n} \,dt ) - \Pr\{
\Lambda_n > 1\}$,
thus proving the theorem.\vadjust{\goodbreak}
\end{pf}

In the case of a random integer sequence given by the generation sizes,
it is also possible to give a result analogous to Proposition \ref
{propsigma-equivalence} (whose proof is omitted).
%
%pr4.2 #&#
\begin{proposition}
Let $\{Z_n\}$ be a Galton--Watson process with an offspring distribution $Z$,
satisfying $Z\geq1$ almost surely. Let $\Lambda_n$ be the minimum
weight of the $n$th generation. We have
\[
\Pr\biggl\{ \sum_n \Lambda_n <
\infty\biggr\} = 1
\]
if and only if the following two conditions are satisfied:
\begin{eqnarray*}
\mbox{\textup{\hphantom{i}(i)}}\hspace*{26.5pt}\quad \Pr\biggl\{ \sum_n \bigl( \Pr\{ W >
1 \} \bigr)^{Z_n} < \infty\biggr\}&=&1\quad\mbox{and}
\\
\mbox{\textup{(ii)}}\quad \Pr\biggl\{ \sum_n \int
_0^1 \bigl( \Pr\{ W > t \} \bigr)^{Z_n}
\,dt < \infty\biggr\}&=&1.
\end{eqnarray*}
Otherwise, $\Pr\{ \sum_n \Lambda_n < \infty\} = 0$.
\end{proposition}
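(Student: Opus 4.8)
The plan is to run the proof of Proposition~\ref{prop:sigma-equivalence} verbatim, but conditionally on the realization of the Galton--Watson process, and then to integrate out the randomness of the tree. First note that since $\gwd\ge 1$ almost surely, every generation is nonempty (indeed $Z_n$ is nondecreasing in $n$), so each $\Lambda_n$ is well defined and the process never dies. Condition on the whole sequence $(\gwd_k)_{k\ge 0}$. Given this sequence, generation $n$ receives exactly $\gwd_n$ incoming edges, each carrying an independent copy of $W$, and edges in distinct generations are distinct; hence, conditionally, the variables $\Lambda_n$ (each the minimum of $\gwd_n$ i.i.d.\ copies of $W$) are independent across $n$, with
\[ \Pr\{\Lambda_n > 1 \mid (\gwd_k)_k\} = \bigl(\Pr\{W>1\}\bigr)^{\gwd_n}, \qquad \E\bigl\{\Lambda_n\,\mathbf 1_{[\Lambda_n\le 1]}\mid (\gwd_k)_k\bigr\} = \Bigl(\int_0^1 \bigl(\Pr\{W>t\}\bigr)^{\gwd_n}\,dt\Bigr) - \bigl(\Pr\{W>1\}\bigr)^{\gwd_n}, \]
exactly the quantities appearing in the proof of Proposition~\ref{prop:sigma-equivalence}.

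Next I would apply Kolmogorov's three-series theorem to the conditionally independent sequence $(\Lambda_n)$. As before, the truncated variables $\Lambda_n\mathbf 1_{[\Lambda_n\le 1]}$ lie in $[0,1]$, so the variance series is dominated by the expectation series. Thus, conditionally on $(\gwd_k)_k$, the sum $\sum_n\Lambda_n$ is finite almost surely if and only if both $\sum_n(\Pr\{W>1\})^{\gwd_n}<\infty$ and $\sum_n\int_0^1(\Pr\{W>t\})^{\gwd_n}\,dt<\infty$, and if either series diverges then $\sum_n\Lambda_n=\infty$ conditionally almost surely. Writing $E_1$ and $E_2$ for these two convergence events (both measurable with respect to $(\gwd_n)_n$), the conditional probability $\Pr\{\sum_n\Lambda_n<\infty\mid (\gwd_k)_k\}$ equals the indicator of $E_1\cap E_2$; integrating over the law of $(\gwd_n)_n$ gives $\Pr\{\sum_n\Lambda_n<\infty\} = \Pr\{E_1\cap E_2\}$.

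Finally I would invoke the zero--one law. The event $\{\sum_n\Lambda_n<\infty\}$ is exactly min-summability of the random weighted tree $T_\gwd^W$, which by the claim established in the introduction is a $0$-$1$ event (the conditioning on survival there is vacuous here since $\gwd\ge 1$). Hence $\Pr\{E_1\cap E_2\}\in\{0,1\}$. If it equals $1$, then $\Pr\{E_1\}=\Pr\{E_2\}=1$, which are precisely conditions (i) and (ii); conversely, if (i) and (ii) hold then $\Pr\{E_1\cap E_2\}=1$, so $\sum_n\Lambda_n<\infty$ almost surely. In every other case $\Pr\{\sum_n\Lambda_n<\infty\}$ is not $1$, hence is $0$, which gives the last clause. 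I do not expect a genuine obstacle; the only point needing care is the conditional-independence assertion and the observation that the three-series theorem, applied conditionally, produces a clean $\{0,1\}$-valued conditional probability for $\{\sum_n\Lambda_n<\infty\}$ — this is what allows the identification $\Pr\{\sum_n\Lambda_n<\infty\}=\Pr\{E_1\cap E_2\}$ rather than an expression involving an intermediate conditional probability.
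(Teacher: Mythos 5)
Your proof is correct, and it fills in an argument that the paper explicitly omits (``whose proof is omitted''). Conditioning on $(\gwd_k)_{k\ge 0}$ makes the $\Lambda_n$ conditionally independent (edges at different levels are distinct, and the weights are independent of the tree shape), so the three-series theorem of Proposition~\ref{prop:sigma-equivalence} applies conditionally and yields $\Pr\{\sum_n\Lambda_n<\infty\mid(\gwd_k)_k\}=\mathbf 1_{E_1\cap E_2}$; integrating gives $\Pr\{\sum_n\Lambda_n<\infty\}=\Pr\{E_1\cap E_2\}$, and the final dichotomy then follows from the $0$-$1$ law established by the Claim in the introduction (applicable here since $\gwd\ge 1$ a.s.\ makes survival automatic and $\Pr\{\gwd=1\}<1$ gives $\E\gwd>1$). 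You correctly identify that this zero--one law is doing real work: without it, $\Pr\{E_1\cap E_2\}<1$ would only give $\Pr\{\sum_n\Lambda_n<\infty\}<1$, not $0$.
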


The two above propositions are likely the most general form of
necessary and sufficient conditions on min-summability one may hope for.
However, under some extra conditions on the sequence $\sigma$, it is
possible to unify the two conditions of
Proposition~\ref{propsigma-equivalence} into one single and simpler condition.
%
%co4.3 #&#
\begin{corollary}\label{corsigmasummability}
Let $\sigma$ be a sequence of integers such that there exists \mbox{$c > 1$}
with the property that for all large enough values of $n$, $\sigma
_{n+1}\geq c\cdot\sigma_n$ (think of the speed function $f$; see
Lemma~\ref{lemexponential}).
Then\vspace*{1pt} $W$ is $\sigma$-summable if and only if $\sum_n F_W^{-1}(\frac
{1}{\sigma_n}) < \infty$.
\end{corollary}
\begin{pf}
Note that, under the assumption of the corollary on the growth of
$\sigma_n$,
condition (i) of Proposition~\ref{propsigma-equivalence} always
holds, provided that $\Pr\{W > 1\} <1$.

Let\vspace*{1pt} $\sigma$ be a sequence satisfying the condition $\sigma_{n+1}\geq
c\cdot\sigma_n$ for all $n$. Let $a_{0}=0$ and
$a_{n}=F_{W}^{-1}(\frac{1}{\sigma_{n}})$ for $n\ge1$, and
suppose\vspace*{1pt}
that $\sum_{n\ge0}a_{n}<\infty$. In this case, trivially $\Pr\{W >
1\} <1$.
We show that condition (ii) of Proposition \ref
{propsigma-equivalence} holds.
We have
\begin{eqnarray*}
\int_0^1 \bigl( \Pr\{ W > t \}
\bigr)^{\sigma_n} \,dt &=& \int_{0}^{a_{n-1}} \bigl(\Pr
\{ W > t \} \bigr)^{\sigma_n} \,dt + \int_{a_{n-1}}^{1}
\bigl( \Pr\{ W > t \} \,dt \bigr)^{\sigma_n}
\\
&\leq& a_{n-1} + \sum_{m=1}^n
a_{m-1} \bigl( \bigl(\Pr\{ W > a_{m} \}
\bigr)^{\sigma_n} - \bigl(\Pr\{W > a_{m-1}\}\bigr)^{\sigma_n}
\bigr)
\\
&\leq& a_{n-1} + \sum_{m=1}^n
a_{m-1}(1-1/\sigma_m)^{\sigma_n}.
\end{eqnarray*}
Thus,
\begin{eqnarray*}
\sum_n \int_0^1
\bigl( \Pr\{ W > t \} \bigr)^{\sigma_n} \,dt &\leq& \sum
_n a_n + \sum_m
a_{m-1}\sum_{n\geq m} (1-1/
\sigma_m)^{\sigma
_n}
\\
&\leq& \sum_n a_n + \sum
_m a_{m-1}\sum_{n\geq m}
(1-1/\sigma_m)^{c^{n-m}\sigma_m}
\\
&\leq& \sum_n a_n + \sum
_m a_{m-1} \sum_{j=0}^\infty
e^{-c^j}
\\
&=& O(1)\sum_n a_n < \infty.
\end{eqnarray*}
This shows that $W$ is $\sigma$-summable.

To prove the other direction, suppose that $W$ is $\sigma$-summable,
so that by Proposition~\ref{propsigma-equivalence},
\[
\sum_n \int_0^1
\bigl( \Pr\{ W > t \} \bigr)^{\sigma_n} \,dt < \infty.
\]

Since $W$ is $\sigma$-summable, we have $F_W(1) >0$ and so there
exists an integer $N$ such that for $n\geq N$, $a_n \leq1$. Thus,
\begin{eqnarray*}
\sum_n \int_0^1
\bigl( \Pr\{ W > t \} \bigr)^{\sigma_n} \,dt &\geq& \sum
_{n\geq N} \int_0^{a_{n}} \bigl( \Pr
\{ W > t \} \bigr)^{\sigma_n} \,dt
\\
&\geq& \sum_{n\geq N} \int_0^{a_{n}}
\bigl( 1- \Pr\{ W \leq a_{n} \} \bigr)^{\sigma_n} \,dt
\\
&=& \sum_{n\geq N} \int_0^{a_{n}}
\biggl( 1- \frac1{\sigma_{n}} \biggr)^{\sigma_n} \,dt
\\
&=& \Omega(1)\sum_{n\geq N} a_n.
\end{eqnarray*}
It follows that $\sum_n a_n <\infty$ and the corollary follows.
\end{pf}

Combining the above corollary with Theorem~\ref{thmspeed} and
Proposition~\ref{propsigma-addable},
we infer a proof of Theorem~\ref{thmcomputableequivalence}.

\subsection*{Examples and special cases} Here we give a family of
examples of applications of Proposition~\ref{propsigma-equivalence}.
The notation is that of Proposition~\ref{propsigma-equivalence}. (In
particular, $\Lambda_n$~is the minimum of
$\sigma_n$ copies of the weight distribution $W$.)
\begin{longlist}
\item
If $W \ge a > 0$, then condition (ii) of Proposition \ref
{propsigma-equivalence} does not hold, and so
$\sum_n \Lambda_n = \infty$. (This also trivially follows from
$\Lambda_n \ge a$.)\vadjust{\goodbreak}
This example shows that the only interesting cases occur when $0$ is an
accumulation point of the
distribution.
\item
If $W=0$ with probability $p > 0$, then both the conditions of
Proposition~\ref{propsigma-equivalence} hold
if $\sum_n (1-p)^{\sigma_n} < \infty$. On the other hand, $\sum_n
\Lambda_n < \infty$
implies that $\sum_n (1-p-\varepsilon)^{\sigma_n} < \infty$ for every
$\varepsilon\in(0,p)$.
This case is not of prime interest either. The case $p=0$ with $0$
being an accumulation point of $W$ is the most interesting.
\item
If $W$ is uniform on $[0,1]$, then the conditions of Proposition \ref
{propsigma-equivalence} are equivalent to
\[
\sum_n \frac{1}{\sigma_n + 1} < \infty.
\]
\item
If $W$ is exponential, then $\Lambda_n \inlaw E/\sigma_n$, where
$E$ is exponential. The sequence $\Lambda_n$ has almost surely a
finite sum
if and only if
\[
\sum_n \frac{1}{\sigma_n} < \infty.
\]
\item
For the sequence $\sigma_n = n$, assuming that there is no atom at the
origin and that $0$ is an
accumulation point for $W$, it is easy to verify that $\sum_n \Lambda_n
< \infty$
almost surely if and only if
\[
\int_0^1 \frac{1}{\Pr\{ W > t \}} \,dt < \infty.
\]
\item
For the sequence $\sigma_n \sim c^ n$, with $c > 1$ a positive
constant, and assuming no atom at the origin,
but with $0$ an accumulation point for $W$, it is easy to verify that
$\sum_n \Lambda_n < \infty$
almost surely if and only if
\[
\int_0^1 \ln\biggl( \frac{1}{\Pr\{ W > t \} }
\biggr) \,dt < \infty.
\]
\end{longlist}

%s5 #&#
\section{Sharpness of the condition in the Equivalence theorem}
\label{seccounterexample}

The main result of this article, the Equivalence theorem, gives a
sufficient condition on a distribution $Z$ for the equality $\Expl(Z)=
\MinSum(Z)$ to occur.
This condition, that for some $\varepsilon>0$ the inequality $\Pr\{Z\ge
m^{1+\varepsilon}\}\ge1/m$ holds for all sufficiently large $m\in
\mathbb{N}$, demands that $Z$ has a heavy tail and, furthermore, that
the tail is consistently heavy. This condition ensures that the
generation sizes (equivalently, the speed) of the corresponding
branching process are at least double exponential. Furthermore, it
ensures that the rate of growth is always at least the rate associated
with double exponential functions [i.e., $f(n+1)\ge f(n)^{1+\varepsilon}$].
It is therefore natural to ask:
\begin{longlist}
\item Could a weaker version of our condition still imply $\Expl
(Z)\!=\!\MinSum(Z)$?
\item Could a lower bound on the speed of $Z$ alone (e.g., $Z$ has a
speed $f$ which is at least double exponential) be sufficient to
guarantee $\Expl(Z)=\MinSum(Z)$?\vadjust{\goodbreak}
\end{longlist}

Theorem~\ref{propinert} answers (i) in the negative (almost completely)
by showing that no substantially weaker version of our condition
implies $\Expl(Z)=\MinSum(Z)$.
Theorem~\ref{prophaste} answers (ii), completely, in the negative.
In a sense, these results show the Equivalence theorem to be best possible.
%
%th5.1 #&#
\begin{theorem} \label{propinert}
Let $g\dvtx \N\to\N$ be an increasing function satisfying $g(m)=m^{1+o(1)}$.
Then there is a distribution $Z$, satisfying $\Pr\{Z\ge g(m)\}\ge1/m$
for all $m\in N$, but for which $\Expl(Z)\neq\MinSum(Z)$.
\end{theorem}
%
%th5.2 #&#
\begin{theorem}\label{prophaste} Let $s\dvtx \N\to\N$ be any function.
Then there is a function $f\dvtx \N\to\N$, satisfying $f(n)\ge s(n)$ for
all $n\in\mathbb{N}$, and a distribution $Z$ for which $f$ is a
speed, such that $\Expl(Z)\neq\MinSum(Z)$.
\end{theorem}

There does not seem to be an obvious intuitive way to judge, for a
given distribution $Z$, whether the equality $\Expl(Z)=\MinSum(Z)$
should hold or not.
So before giving our proof of Theorem~\ref{propinert}, we establish a
sufficient condition for the equality to fail; see Proposition \ref
{propsuffcond} below.

We recall that a function $f\dvtx \N\to\N$ is a speed of a distribution
$Z$ if there exist $a,b\in\N$ such that with positive probability the
bounds $Z_{n/a}\le f(n)\le Z_{bn}$ hold for all $n$.
We shall say that $f$ is a \textit{dominating} speed if we may take $a=1$.
We shall say that $f$ is \textit{swift} if, for some $c>1$, the
inequality $f(n+1)>cf(n)$ holds for all $n\ge0$. It will be useful
(for technical reasons) to restrict our attention to swift dominating speeds.
The following direct consequence of Corollary \ref
{corsigmasummability} and Proposition~\ref{propsigma-addable} will
be useful in our proof of Proposition~\ref{propsuffcond}.
%
%le5.3 #&#
\begin{lemma}\label{leminW0}
Let $Z$ be a distribution with mean greater than $1$, $f$ a swift speed
of $Z$ and $W$ a weight distribution for which the sum $\sum
_{n=1}^{\infty}F^{-1}_{W}(f(n)^{-1})$ is bounded.
Then $W\in\MinSum(Z)$.
\end{lemma}
%
%pr5.4 #&#
\begin{proposition}\label{propsuffcond}
Let $Z$ be any distribution with a swift dominating speed $f$ satisfying
%
%e5.1 #&#
\begin{equation}
\label{eqgoodspeed} \liminf_{n\to\infty} 2^n f(n) f\bigl(\bigl
\lceil n/\omega(n)\bigr\rceil\bigr)^{-n/2} = 0
\end{equation}
for some function $\omega(n)\to\infty$ as $n\to\infty$. Then
$\Expl(Z) \neq\MinSum(Z)$.
\end{proposition}
\begin{pf}
We must prove the existence of a weight distribution $W$ such that
$W\in\MinSum(Z)$ but $W\notin\Expl(Z)$. Before defining $W$, we
first define some sequences on which its definition will be based. From
our assumption on $f$, there exists an increasing sequence $n_i$ such that
%
%e5.2 #&#
\begin{equation}
\label{eqsubseq} \lim_{i\to\infty} 2^{n_i} f(n_i) f
\bigl(\bigl\lceil n_i /\omega(n_i)\bigr\rceil
\bigr)^{-n_i/2} = 0.
\end{equation}
Let us define the sequence $\omega_i$ by $\omega_i=\omega(n_i)$ and
the sequence $\beta_i$ by $\beta_i = \sqrt{\omega_i}$. We note that
$\beta_i\to\infty$ as $i\to\infty$, and so we may choose a
subsequence\vadjust{\goodbreak} $\beta_{i_j}$ with the property that $\beta_{i_j}\ge
2^{j}$ for each $j\ge1$. Finally, set $m_i:=\lceil n_i/\omega_i\rceil$.
We now define the weight distribution $W$ to satisfy
\[
\Pr\biggl\{W< \frac{1}{\beta_{i_{j}}m_{i_j}} \biggr\} = \frac
{1}{f(m_{i_j})} \qquad\mbox{for all }
j \geq1
\]
by placing probability mass $f(m_{i_j})^{-1}\!-\!\sum
_{j'>j}f(m_{i_{j'}})^{-1}$ at position $1/\beta_{i_{j+1}}m_{i_{j+1}}$
for each $j\ge1$, and probability mass $1-\sum_{j'\ge1}
f(m_{i_{j'}})^{-1}$ at $1$.

We first observe that $W\in\MinSum(Z)$. Indeed, this follows
immediately from Lemma~\ref{leminW0} and the observation that
\[
\sum_{n\ge1}F_{W}^{-1}
\bigl(f(n)^{-1}\bigr) \le\sum_{j\ge1}
m_{i_j}\cdot\frac{1}{\beta_{i_{j}}m_{i_j}} \le\sum_{j\ge1}
\frac{1}{\beta_{i_j}} \le\sum_{j\ge1} \frac{1}{2^j}
= 1.
\]

We now observe that $W\notin\Expl(Z)$. We must prove that
$\Pr\{E\}<1$, where $E$ denotes the event of an infinite path of finite
weight.
Let $G$ be the event that $Z_{n}\le f(n)$ for all $n \in\N$;
since $f$ is a dominating speed of $Z$, $G$ has positive probability.
Thus, it
suffices to prove that $\Pr\{E \mid G\}=0$.

Let $A_j$ be the event that there exists a path from the root to
generation $n_{i_{j}}$ of weight less than $\beta_{i_{j}}/2$.
The event $E$ may occur only if $A_j$ occurs for all sufficiently large
$j$, so it suffices to prove that $\Pr\{A_j\mid G\}\to0$ as $j\to
\infty$.

For the event $A_j$ to occur there must exist a path from the root to
generation $n_{i_j}$ at least half of whose edges have weight less than
$\beta_{i_j}/n_{i_j}$. Since under event $G$ there are at most
$f(n_{i_j})$ such paths, and for each path there are less than
$2^{n_{i_j}}$ choices for a subset of half its edges, we have
\[
\Pr\{A_j \mid G\} \leq2^{n_{i_j}} f(n_{i_j}) \bigl(
\Pr\{W < \beta_{i_j}/n_{i_j}\} \bigr)^{n_{i_j}/2}.
\]
Since
\[
\Pr\{W < \beta_{i_j}/n_{i_j}\} = \Pr\bigl\{W < 1/(
\beta_{i_j} m_{i_j})\bigr\} = 1/f(m_{i_j}),
\]
it follows from (\ref{eqsubseq}) that $\Pr\{A_j \mid G\} \rightarrow
0$ as required.
\end{pf}
\begin{pf*}{Proof of Theorem~\ref{propinert}}
Let $g$ be any increasing function satisfying the condition of the
theorem, that is, $g(m) = m^{1+o(1)}$.
We define a distribution $Z$ satisfying $\Pr\{Z\ge g(m)\}\ge1/m$ for
all $m\in\N$,
which has a swift dominating speed $f$ satisfying
$\lim\inf_{n\to\infty}2^{n}f(n)f(\lceil n^{1/2}\rceil)^{-n/2}=0$;
the proof is then complete by Proposition~\ref{propsuffcond}.

There is a sense in which it is difficult to achieve these two
objectives simultaneously.
The first asks that $Z$ has a sufficiently heavy tail, while the second
would seem to get more likely to occur if the tail of $Z$ were less
heavy. Our approach to achieving the objectives simultaneously is to
define $Z$ to have a heavy, but not at all smooth, tail. In the
resulting Galton--Watson branching process the growth of generation
sizes does not at all resemble a smooth fast growing function (such as
a double exponential), but instead consists of a number of periods of
exponential growth, each period much longer than all proceeding
periods, and with a multiplicative factor very much larger
[in fact, the lengths will be $(2n_i)_{i\ge1}$ and the multiplicative
factors $(m_i)_{i\ge1}$; these sequences are defined below].

Define $n_{i}=10^{10^{i}}$ for each $i\ge1$, and $\varepsilon
_{i}=1/10n_{i}=10^{-(10^{i}+1)}$.
As $g(m)=m^{1+o(1)}$, there exists, for each $\varepsilon_{i}$, a natural
number $m_{i}$
such that $g(m)\le m^{1+\varepsilon_{i}}$ for all $m\ge m_{i}^{1/2}$.
Furthermore, we may
choose $(m_i)_{i \in\N}$ to in addition satisfy
%
%e5.3 #&#
\begin{equation}
\label{msobey} m_{i}\ge16n_{i}^{2}M_{i-1}^{2}
\qquad\mbox{for all } i\ge1,
\end{equation}
where $M_0 = 1$ and $M_{j}:=\prod_{i=1}^{j}m_{i}^{2n_{i}}$ for $j \geq1$.
Next define sequences $(N_{j})_{j \in\N}$ and $(L_{j})_{j \in\N}$ by
\[
N_{j}:= \sum_{i=1}^{j}n_{i}
\quad\mbox{and}\quad L_{j}:= m_{j}\prod
_{i=1}^{j-1}m_{i}^{2n_{i}}.
\]

As we mentioned above, we shall define the distribution $Z$ so that the
growth of generation sizes of $T_{Z}$ consists of a number of periods of
exponential growth, each period much longer than all proceeding periods,
and with a multiplicative factor very much larger. [The $j$th period of
growth will have length (approximately) $2n_{j}$ and multiplicative factor
$m_{j}$.] In this context $L_{j}$ is approximately the generation size at
the start of this $j$th period of growth (in fact, after the first step of
this period) and $M_{j}$ the generation size when it ends (i.e., at the
point at which we shall switch into the next, faster, period of growth).
One may observe that $L_{j}=m_{j}M_{j-1}$; note, however, that $L_{j}$
is much larger than $M_{j-1}$, since (\ref{msobey}) implies that
$m_{j}$ is already much larger.

Define the distribution $Z$ by
\begin{eqnarray*}
\Pr\{Z\ge L_{1}\} &=& 1;
\\
\Pr\bigl\{Z\ge m^{1+\varepsilon_{i}}\bigr\} &=& \frac{1}{m},\qquad
L_{i}^{1/(1+\varepsilon_{i})}<m\le M_{i},\qquad i\ge1;
\\
\Pr\{Z\ge L_{i+1}\} &=& \frac{1}{M_{i}},\qquad i\ge1.
\end{eqnarray*}
It is easily verified that this distribution satisfies $\Pr\{Z\ge
g(m)\}\ge1/m$ for all \mbox{$m\in\N$}.
Now define the function $f\dvtx \N\to\N$ (which will be a speed for $Z$) by
\[
f(n) = L_{i+1}m_{i+1}^{2(n-N_{i})-1} \qquad\mbox{with $i$ chosen
so that } N_i < n \leq N_{i+1}.
\]
It is also quite easily verified that $f$ satisfies (\ref
{eqgoodspeed}), using $\omega(n) = n^{1/2}$. In particular, we
observe that
$f(n_{i})\le L_{i}m_{i}^{2n_{i}}$ and,
since $\lceil n_{i}^{1/2}\rceil-N_{i-1}\ge n_{i-1}$,
we have that $f(\lceil n_{i}^{1/2}\rceil)^{n_{i}/2}\ge
L_{i}m_{i}^{n_{i-1}n_{i}}$. It is also easily observed that $f$ is swift.
Thus, in light of Proposition~\ref{propsuffcond}, all that is
required to complete the proof is to demonstrate that $f$ is a
dominating speed of $Z$.
Though it is conceptually straightforward, the proof is rather long; we
stress that it is really just a technical detail.

We prove that with positive probability the bounds $Z_{n}\le f(n)\le
Z_{4n}$ hold for all $n\in\N$. Let $E$ be the event that $Z_{n}>f(n)$
for some $n$, and let $F$ be the event that $Z_{4n}<f(n)$ for some $n$.
Let us subdivide these events by the minimum $n$ for which the required
inequality fails. Let $E_{n}$ be the event that $n$ is minimal such
that $Z_{n}>f(n)$, and $F_n$ the event that $n$ is minimal such that
$Z_{4n} < f(n)$.
We will show that $\sum_{n \geq1} E_n \leq1/4$ and $\sum_{n \geq1}
F_n \leq1/4$, which will complete the proof.

We have stated that our example is designed to exhibit a number of
periods of exponential growth. Once the number of nodes of a given
generation is much larger than $M_{i-1}$, it is clear that, from this
point on, the growth should always be at least geometric (i.e.,
exponential) with multiple~$m_{i}$. Indeed, among $m\gg M_{i-1}$ nodes,
one expects about $m/M_{i-1}$ to have $L_{i}=m_{i}M_{i-1}$ children.
Considering these children alone, we see that the size of the next
generation should be at least $m_{i}$ times as large.

Our bound on the probability of the event $F$ is therefore relatively
straightforward, requiring us to formalize the above statement. The
bound on the probability of $E$ is more difficult, as we are required
to control all ways in which the process could grow faster.
\begin{claimnonumber*}
$\Pr\{E\} \leq1/4$.
\end{claimnonumber*}
\begin{pf}
We shall define two sequences $p_{i,j,k}$ and $q_{i}$ of probabilities,
corresponding to the probabilities of certain unlikely events (events
that would cause faster than expected growth). We then prove a bound on
the probability of $E$ based on the $p_{i,j,k}$ and $q_{i}$,
specifically that this probability is at most their sum. It then
suffices to bound by $1/4$ the sum $\sum_{i,j,k}p_{i,j,k}+\sum_{i}q_{i}$.

For each triple $i,j,k\in\mathbb{N}_{0}$ such that $i\ge1$, $1\le
j\le
n_{i}-1$ and $0\le k\le4j$, we define $p_{i,j,k}$ to be the probability
that among $M_{i-1}m_{i}^{2j}$ independent copies of $Z$, at least
$M_{i-1}m_{i}^{k/2}$
exceed $M_{i-1}m_{i}^{2j+1-k/2}$. We define $q_{1}$ to be the probability
that $Z\ge m_{1}^{2}$ and,
for $i\ge2$, we define $q_{i}$ to be the probability that among
$M_{i-1}$ copies of $Z$, at least one of them
exceeds $M_{i-1}m_{i}^{3/2}$.

We prove the bound
\[
\Pr\{E\}=\sum_{n\ge1}\Pr\{E_{n}\}\le\sum
_{i,j,k}p_{i,j,k}+\sum
_{i}q_{i}.
\]
Notice that for the event $E_{N_{i-1}+1}$ to occur,
we must have
\[
Z_{N_{i-1}}\le f(N_{i-1})=M_{i-1} \quad\mbox{and}\quad
Z_{N_{i-1}+1}> f(N_{i-1}+1)=M_{i-1}m_{i}^{2}.
\]
This in turn implies that at least one of the nodes in
generation $N_{i-1}$ has more than $M_{i-1}m_{i}^{3/2}$ children [as
$M_{i-1}\le m_{i}^{1/2}$; see condition (\ref{msobey})].
Thus, we may bound for each $i$ the probability of the event
$E_{N_{i-1}+1}$ by $q_{i}$.\vadjust{\goodbreak}

Next, for $n$ of the form $N_{i-1}+j+1$ for some $i\in\N$ and $1\le
j\le n_{i}-1$, we note that the occurrence of $E_{n}$ implies
that
\[
Z_{n-1}\le M_{i-1}m_{i}^{2j} \quad\mbox{and}\quad
Z_{n}> M_{i-1}m_{i}^{2j+2}.
\]
It follows that for some $0\le k\le4j$, there are at least
$M_{i-1}m_{i}^{k/2}$ nodes of generation $n-1$ with more than
$M_{i-1}m_{i}^{2j+1-k/2}$ children. Indeed, if this were not the case,
then we would
have
\begin{eqnarray*}
Z_{n} &\leq& \sum_{k=0}^{4j}
\bigl(M_{i-1}m_i^{k/2}\bigr) \bigl(
M_{i-1}m_i^{2j+3/2-k/2}\bigr)
\\
&=& (4j+1)M_{i-1}^{2}m_{i}^{2j+3/2}
\\
&\leq& M_{i-1}m_{i}^{2j+2} \qquad\bigl[\mbox{since }
(4j+1)M_{i-1}\le4n_{i}M_{i-1}\le
m_{i}^{1/2}\bigr].
\end{eqnarray*}
It easily follows that $\Pr\{E_{n}\}\le\sum_{0\le k\le4j} p_{i,j,k}$.

We now prove the bound $\sum_{i,j,k}p_{i,j,k}+\sum_{i}q_{i}\le1/4$.
By the bounds (\ref{msobey}), it suffices to prove for each triple
$i,j,k\in\mathbb{N}_{0}$ with $i\ge1$, $1\le j\le n_{i}-1$ and $0\le
k\le4j$, that
%
%e5.4 #&#
\begin{equation}
\label{eqwanted} p_{i,j,k}\le\bigl(m_{i}/e^{2}
\bigr)^{-M_{i-1}m_{i}^{k/2}/2}
\end{equation}
and
\[
q_{i}\le\frac{M_{i-1}}{m_{i}}.
\]

The bound on $q_{i}$ is trivial; since $1/(1+\varepsilon_i)\ge2/3$, it
follows that
\[
\Pr\bigl\{Z\ge M_{i-1}m_{i}^{3/2}\bigr\}=
\bigl(M_{i-1}m_{i}^{3/2}\bigr)^{-1/(1+\varepsilon
_{i})}\le
m_{i}^{-1}.
\]
We bound the probability $p_{i,j,k}$ (that among $M_{i-1}m_{i}^{2j}$
independent copies of $Z$ at least $M_{i-1}m_{i}^{k/2}$
exceed $M_{i-1}m_{i}^{2j+1-k/2}$) using a union bound. By the familiar
estimate ${s\choose t}\le(es/t)^{t}$, the number of choices of the set of
$M_{i-1}m_{i}^{k/2}$ copies is
\[
\pmatrix{M_{i-1}m_{i}^{2j}
\cr
M_{i-1}m_{i}^{k/2}}\le\bigl(em_{i}^{2j-k/2}
\bigr)^{M_{i-1}m_{i}^{k/2}}.
\]
For each copy of $Z$ we have
\[
\Pr\bigl\{Z >M_{i-1}m_{i}^{2j+1-k/2}\bigr\} =
\bigl(M_{i-1}m_{i}^{2j+1-k/2}\bigr)^{-1/(1+\varepsilon_{i})}\le
m_{i}^{-(2j+1/2-k/2)},
\]
where for the final inequality we have used that $\varepsilon_i=1/(10n_i)$
and (since $2j+1/2-k/2\le2n_i$)
\[
2j+1-k/2=2j+1/2-k/2+1/2\ge(2j+1/2-k/2) \bigl(1+1/(4n_i)\bigr).
\]
Thus, the probability that a given set of $M_{i-1}m_{i}^{k/2}$ copies of
$Z$ all exceed $M_{i-1}m_{i}^{2j+1-k/2}$ is at most
\[
m_{i}^{-(2j+1/2-k/2)M_{i-1}m_{i}^{k/2}},
\]
and (\ref{eqwanted}) now follows by a union bound.
\end{pf}
%
%cl5.5 #&#
\begin{claim}
$\sum_{n\ge1}\Pr\{F_{n}\}\le1/4$.
\end{claim}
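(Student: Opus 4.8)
The plan is to bound $\Pr\{F_n\}$ by a quantity that decays doubly exponentially along the growth periods of $Z$, after first disposing of the initial period: for $n\le N_1$ one has $\Pr\{F_n\}=0$, since $\Pr\{Z\ge L_1\}=1$ forces every node to have at least $L_1=m_1$ children, whence $Z_{4n}\ge m_1^{4n}\ge m_1^{2n}=f(n)$ there. The union bound $\sum_{n\ge 1}\Pr\{F_n\}\le 1/4$ will then follow with room to spare.

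So fix $n$ with $N_i<n\le N_{i+1}$ for some $i\ge 1$, and set $t:=n-N_i\ge 1$, so that $f(n)=L_{i+1}m_{i+1}^{2t-1}=M_im_{i+1}^{2t}$. The event $F_n$ (minimality) forces $Z_{4m}\ge f(m)$ for every $m<n$; since $n_i\ge 2$ we have $N_i-1\ge N_{i-1}+1$, so in particular $Z_g\ge f(N_i-1)=M_i/m_i^2=M_{i-1}m_i^{2n_i-2}$ at generation $g:=4(N_i-1)$. Because a Galton--Watson population is stochastically monotone in the number of founders, it suffices to bound, for a process started with $\lceil M_{i-1}m_i^{2n_i-2}\rceil$ particles, the probability of holding fewer than $f(n)$ particles $4t+4$ generations later. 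Anchoring at generation $4(N_i-1)$ rather than at the ``obvious'' generation $4(n-1)$ is the one genuinely delicate point, and I return to it below.

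The engine is a one-generation growth estimate, proved by a routine Chernoff bound: a generation of $c$ particles is followed by one of at least $c\,m_{j+1}/2$ particles with probability at least $1-\exp(-c/(8M_j))$ --- using the threshold $L_{j+1}$, for which $\Pr\{Z\ge L_{j+1}\}=1/M_j$ and $L_{j+1}=m_{j+1}M_j$ --- and for $j=0$ it holds with probability $1$. I would iterate this first for four generations with $j=i-1$ (growth factor $m_i$), carrying the population from $c_0:=M_{i-1}m_i^{2n_i-2}$ up to at least $c_0(m_i/2)^4=M_im_i^2/16$ by generation $4N_i$, at total cost at most $2\exp(-m_i^{2n_i-2}/8)$ in failure probability (the entering count, hence each bound, only improves en route), and then for the remaining $4t$ generations with $j=i$ (growth factor $m_{i+1}$), reaching at least $(M_im_i^2/16)(m_{i+1}/2)^{4t}$ at generation $4n$, at additional cost at most $\sum_{k\ge 0}\exp(-(m_i^2/128)(m_{i+1}/2)^k)\le 2\exp(-m_i^2/128)$. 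As $2n_i-2\ge 2$, the first cost is dominated by the second, giving $\Pr\{F_n\}\le 4\exp(-m_i^2/128)$. It then remains only to verify $(M_im_i^2/16)(m_{i+1}/2)^{4t}\ge M_im_{i+1}^{2t}=f(n)$, i.e.\ $m_i^2 m_{i+1}^{2t}\ge 16^{t+1}$, which holds since $m_{i+1}\ge 16n_{i+1}^2\ge 16$ forces $m_{i+1}^{2t}\ge 16^{2t}\ge 16^{t}$ while $m_i^2\ge 16^2$.

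Summing over the at most $n_{i+1}$ values of $n$ in period $i+1$ and recalling $\Pr\{F_n\}=0$ for $n\le N_1$,
\[ \sum_{n\ge 1}\Pr\{F_n\}\ \le\ \sum_{i\ge 1}4\,n_{i+1}\exp(-m_i^2/128)\ \le\ \sum_{i\ge 1}4\,n_i^{10}\exp(-2n_i^4)\ \le\ \tfrac14, \]
using $n_{i+1}=n_i^{10}$ and $m_i\ge 16n_i^2$ from~\eqref{msobey}; the series is dominated by its $i=1$ term, already utterly negligible as $n_1=10^{10}$. Returning to the delicate point: had we anchored at $4(n-1)$, then for $n=N_i+1$ we would know only $Z_{4N_i}\ge f(N_i)=M_i$, which is too few particles for concentrated multiplicative growth --- the expected number of them with $\ge L_{i+1}$ children being a mere $1$ --- whereas anchoring one period-step earlier, where $f$ equals $M_{i-1}m_i^{2n_i-2}\gg M_{i-1}$, lets the four period-$i$ generations build the population up to $\gg M_i$ before period-$(i+1)$ growth is needed, which restores concentration. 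Everything else is bookkeeping of which threshold governs which block of generations.
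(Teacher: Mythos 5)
Your proof is correct, and it reaches the bound by a genuinely different decomposition of the events $F_n$. The paper anchors each $F_n$ at the immediately preceding block, conditioning on $Z_{4(n-1)}\ge f(n-1)$, and controls the four intermediate generations with events $p_{\cdot,j}$ (Chernoff bounds on the number of nodes having more than $m_i$ children, tuned so that each generation contributes a factor $m_i^{1/2}$, exactly matching the step of $f$); this makes the period boundary $n=N_i+1$ an awkward special case, which the paper patches with the auxiliary events $q_i$ and a hybrid bound $p_{i,4n_{i}}+q_{i}+p_{i+1,2}+p_{i+1,3}$. You instead always anchor at the preceding period boundary, conditioning on $Z_{4(N_i-1)}\ge f(N_i-1)$, and run a single uniform one-generation Chernoff estimate (threshold $L_{j+1}$, multiplicative factor $m_{j+1}/2$) forward through $4(t+1)$ generations, accepting a substantial overshoot of $f(n)$. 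This buys a uniform treatment of every $n$ in a period---the surplus built up over the first four generations (to roughly $M_i m_i^2/16 \gg M_i$) removes the very boundary problem that forces the paper to introduce $q_i$---at the price of a coarser per-generation estimate than the paper's tight factor-$m_i^{1/2}$ matching. Your diagnosis of why the obvious anchoring at $4(n-1)$ fails for $n=N_i+1$ (only $\approx M_i$ particles, expected number of exceedances $\approx 1$, no concentration) is exactly the difficulty the paper's $q_i$ is designed around, and your observation that $\Pr\{F_n\}=0$ for $n\le N_1$ makes explicit a case the paper's displayed formulas quietly omit. Both arguments arrive at a sum that is dominated by a single astronomically small term, so the factor-four slack is immaterial.
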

\begin{pf}
Our approach is similar to that used in the previous proof. For $i\ge
1$ and $2\le j\le4n_{i}$, we define $p_{i,j}$
to be the probability that from a collection of $M_{i-1}m_{i}^{j/2}$
copies of $Z$, fewer than $M_{i-1}m_{i}^{j/2-1/2}$ exceed $m_{i}$. For
each $i\ge1$, we define $q_{i}$ to be the
probability that the maximum of $M_{i}m_{i}^{1/2}$ copies of $Z$ is
less than $L_{i+1}$. We prove for $n$ of the form $n=N_{i}+1$ that
\[
\Pr\{F_{n}\}\le p_{i,4n_{i}}+q_{i}+p_{i+1,2}+p_{i+1,3}
\]
and for $n$ of the form $n=N_{i}+k, k=2,\ldots, n_{i+1}$, that
\[
\Pr\{F_{n}\}\le p_{i+1,4k-4}+p_{i+1,4k-3}+p_{i+1,4k-2}+p_{i+1,4k-1}.
\]
It will then suffice to bound by $1/4$ the sum $\sum_{i,j}p_{i,j}+\sum
_{i}q_{i}$. For $n=N_{i}+k$, $k=2,\ldots, n_{i+1}$, if the event
$F_{n}$ occurs, then $Z_{4n-4}\ge f(n-1)=M_{i}m_{i+1}^{2k-2}$ and
$Z_{4n}<f(n)=M_{i}m_{i+1}^{2k}$. The required bound now follows, as the
probability for a given $0 \leq l \leq3$ that $l$ is minimal such that
$Z_{4n-l}<M_{i}m_{i+1}^{2k-l/2}$ is at most $p_{i+1,4k-l-1}$. The case
$n=N_{i}+1$ is similar, differing only in that we do not consider the
events $Z_{4n-l}<M_{i}m_{i+1}^{2k-l/2}$ for $0 \leq l \leq3$, but
rather the events $Z_{4n-3}<M_{i}m_{i}^{1/2}$, $Z_{4n-2}<L_{i+1}$,
$Z_{4n-1}<L_{i+1}m_{i}^{1/2}$ and $Z_{4n}<L_{i+1}m_{i}$.

Finally, we prove the bound $\sum_{i,j}p_{i,j}+\sum_{i}q_{i}<1/4$. It
is trivial, using the inequality $(1-p)^{n}\le e^{-pn}$, that $q_{i}\le
\exp(-\sqrt{m_{i}})$. To bound $p_{i,j}$, we first note that $\Pr\{
Z> m_{i}\}\ge1/M_{i-1}$, so from a collection of $M_{i-1}m_{i}^{j/2}$
copies of $Z$ the distribution for the number exceeding $m_{i}$ is
$\Bin(M_{i-1}m_{i}^{j/2},1/M_{i})$. Since this binomial has expected
value $m_{i}^{j/2}\ge2M_{i-1}m_{i}^{j/2-1/2}$, an application of
Chernoff's inequality yields
\[
p_{i,j}\le\exp\biggl(\frac{-m_{i}^{j/2}}{8} \biggr).
\]
\upqed\end{pf}
The proof of Theorem~\ref{propinert} is now complete.
\end{pf*}

The proof of Theorem~\ref{prophaste} is essentially identical to the above.
The only change required is that the following extra condition should
be included in~(\ref{msobey}):
\[
m_{i}\ge\max_{n\le n_{i}}s(n),\qquad i\ge1.
\]
This ensures that the inequality $f(n)\ge s(n)$ holds for all $n\in\N$.
Since the proofs that $f$ is a speed of $Z$ and that $\Expl(Z)\neq
\MinSum(Z)$ are unaffected by this change, Theorem~\ref{prophaste}
does indeed follow.

%s6 #&#
\section{Limit theorem in the case of no explosion}\label{secconvergence}
So far we only considered the appearance of the event of explosion.
In this section we consider the case of weight distributions for a
heavy-tailed branching random walk
for which explosion does not happen, and obtain a precise limit theorem
for the minimum displacement
$M_n$ under some quite strong (smoothness) assumption on the tails of
$Z$. To explain this, let $Z$ be a plump random variable, and denote by
$G_Z(\cdot)$ the moment generating function of $Z$ as before. Note that
%
%e6.1 #&#
\begin{eqnarray}
\label{eqtaub-moment}
K_Z(s)&=& 1- G_Z(1-s)
= \sum_{k=0}^\infty\bigl( \Pr\{Z=k\} -
(1-s)^k\Pr\{Z=k\} \bigr)
\nonumber\\
&=& s \sum_{k=1}^{\infty} \Pr\{Z=k\}
\bigl(1+\cdots+(1-s)^{k-1}\bigr)
\\
&=& s \Biggl(1-\Pr\{Z=0\} + \sum_{k=1}^\infty(1-s)^k
\bigl(1-F_Z(k)\bigr) \Biggr).\nonumber
\end{eqnarray}

Consider now the smoothness condition (\ref{eqsmooth}) on $Z$:
\[
1-F_Z(k) = k^{-\eta}\ell(k)
\]
for some function $\ell$ which is continuous-bounded-and-nonzero at
infinity. In particular, note that one can define
$\ell(\infty) \neq0,\infty$. Using equation (\ref{eqtaub-moment}) and
applying a Tauberian theorem
(see, e.g., Feller~\cite{Feller}, Section XIII. 5, Theorem~5), we see
that condition (\ref{eqsmooth}) is equivalent to
the condition
%
%e6.2 #&#
\renewcommand{\theequation}{\mbox{$\star$}}
\begin{equation}
\label{eqmoment} K_Z(s) \sim a s^\eta
\ell\biggl(\frac1{s}\biggr)
\end{equation}
near $s=0$ for some $a>0$ [indeed, $a = \Gamma(1-\eta)$].
This, in particular, implies that $Z$ is plump and
%
%e6.3 #&#
\renewcommand{\theequation}{\mbox{$\star\star$}}
\begin{equation}\label{star2}
F_Z^{-1}\biggl(1-\frac1m\biggr) =
m^{1+\varepsilon}\tilde\ell(m)
\end{equation}
for a slowly growing function $\tilde\ell$ and $1+\varepsilon=\eta^{-1}$.
We have the following:
%
%th6.1 #&#
\begin{theorem}\label{thmaddspeed}
Let $Z$ be an offspring distribution satisfying (\ref{eqmoment}). Let
$W$ be a nonnegative weight distribution and assume
that $W \notin\Expl(Z)$. Conditional on the survival of the
Galton--Watson process,
\[
\lim_{n\rightarrow\infty} \frac{M_n}{\sum_{k=1}^{n} F_W^{-1}
(1/{h(k)} )}=1.
\]
Here $h(k) = \exp( (1+\varepsilon)^k )$, where $\varepsilon$
is as in (\ref{star2}) and $\eta=(1+\varepsilon)^{-1}$ as in (\ref{eqmoment}).
\end{theorem}

The proof will essentially use the algorithm we presented in
Section~\ref{secproof}.
However, we first need to obtain more precise information on the speed
of the Galton--Watson tree under condition (\ref{eqmoment}).
%
%de6.2 #&#
\begin{definition}[(Additive speed)]\label{defaddspeed}
An increasing function $h\dvtx \mathbb N \rightarrow\mathbb R^+$ is an
\textit{additive} speed for a Galton--Watson
offspring distribution $Z$ if the probability of the increasing events
$E_r$ defined as
\[
E_r:= \bigl\{ h(n-r) \leq Z_n \leq h(n+r) \mbox{ for
all large enough $n$} \bigr\}
\]
tend to one as $r$ goes to infinity
conditional on survival.
\end{definition}
%
%le6.3 #&#
\begin{lemma}\label{lemaddspeed}
Let $Z$ be an offspring distribution satisfying condition (\ref{eqmoment}).
Then the function $h\dvtx \mathbb N\rightarrow\mathbb R^+$ defined by $h(n)
= \exp( (1+\varepsilon)^k )$ is an additive speed for~$Z$.
\end{lemma}
\begin{pf*}{Proof of Theorem~\ref{thmaddspeed}}
Since $h(n)$ is an additive speed for $Z$, we obtain by Lemma \ref
{lemaddspeed} that, conditional on survival,
\[
\lim_{r\rightarrow\infty} \Pr\{E_r\} = 1.
\]

Fix the integer $r$ and suppose the event $E_r$ holds. This means \mbox{$Z_n
\leq h(n+r)$} for
large enough $n$.
This implies that the minimum of level $n$ is at least $F_W^{-1}(\frac
1{h(n+r)})$ for all large enough $n$.
Since by our Equivalence theorem we have a.s.
$\sum F_W^{-1}(1/h(n)) =\infty$, we obtain
\[
\liminf_{n\rightarrow\infty} \frac{M_n}{\sum_{k=1}^{n}
F_W^{-1} (1/{h(k)} )} = \liminf_{n\rightarrow\infty}
\frac{M_n}{\sum_{k=1}^{n}
F_W^{-1} (1/{h(k+r)} )}\geq1
\]
on $E_r$. We infer that on the union of $E_r$, that is, on the event of
nonexctinction, we have
\[
\liminf_{n\rightarrow\infty} \frac{M_n}{\sum_{k=1}^{n}
F_W^{-1} (1/{h(k)} )}\geq1.
\]
We now show that on the union of $E_r$, we have
\[
\limsup_{n\rightarrow\infty} \frac{M_n}{\sum_{k=1}^{n}
F_W^{-1} (1/{h(k)} )}\leq1.
\]
This will finish the proof of the theorem above.

It will be enough to show this on each $E_r$.
In addition, we can also fix an $n_0$ and suppose that for all $n\geq n_0$,
we have $Z_n \geq h(n-r)$ (and then make $n_0$ tend to infinity). Fix a
small $\delta>0$.
One can now apply a variant of the algorithm of Section~\ref{secproof},
by modifying $\alpha$ to $(1+\varepsilon)^{-\delta}$,
started at some large $N > n_0$,
and show that w.h.p., as $N$ goes to infinity, we have
for all $n \geq N$, $X_n\geq h((1-\delta)n)$ [this follows from a
variant of
the inequalities (\ref{ineq1}) and~(\ref{ineq2})].
In addition, given the double exponential growth of $h(n)$, a union
bound argument shows that we can assume with height probability that
for large enough $n$,\vadjust{\goodbreak} the weight of the $n$th edge on the
path constructed in the algorithm is bounded above by
$F_W^{-1}(1/h((1-2\delta)n))$.
Applying now the Equivalence theorem, since both $M_n$ and $\sum
_{k=1}^{n} F_W^{-1} (\frac1{h((1-2\delta)k)})$ tend to infinity,
we obtain that
\[
\limsup_{n\rightarrow\infty} \frac{M_n}{\sum_{k=1}^{n}
F_W^{-1} (1/{h((1-2\delta)k)} )}\leq1.
\]
Since this holds for
any small enough $\delta>0$, and since the function $F_W^{-1}(1/m)$ is
a decreasing function of $m$, a simple argument shows that
\[
\limsup_{n\rightarrow\infty} \frac{M_n}{\sum_{k=1}^{n}
F_W^{-1} (1/{h(k)} )} = \lim_{\delta\rightarrow0}
\limsup_{n\rightarrow\infty} \frac
{M_n}{\sum_{k=1}^{n} F_W^{-1} (1/{h( (1-2\delta)k )}
)} \leq1.
\]
The theorem follows.
\end{pf*}
\begin{pf*}{Proof of Lemma~\ref{lemaddspeed}}
Under some extra conditions on $\ell$ as in Seneta~\cite{Sen69}
or~\cite{Sen73},
a combination of the results of Darling~\cite{Dar70} and Cohn \cite
{Coh77} with the above mentioned
results of Seneta~\cite{Sen69,Sen73} ensures the
existence of a limiting random variable $V$
such that
\[
(1+\varepsilon)^{-n} \log(Z_n+1) \rightarrow V \qquad\mbox{almost surely}
\]
for $V$ having a strictly increasing continuous distribution $v$, $V >
0$ a.s.
on the set of nonextinction of the process,
and $v(0+)=q$, where $q$ is the extinction probability of the
Galton--Watson process.
In the general case of a function $\ell$ continuous bounded and
nonzero at infinity,
the above limit theorem still holds, as we now briefly explain by
following closely Bramson's strategy in~\cite{Bra78}.
Define $\alpha= 1+\varepsilon=\eta^{-1}$.
The general idea in proving such a limit theorem is to prove
first the convergence of the sequences $K^{(n)} (\exp(-\alpha^ns) )$
uniformly on compact sets.
Here,
$K^{(n)}(\cdot) =K^{(n)}_Z(\cdot) = K_{Z_n}(\cdot)$ is the
$n$-times composition of $K_Z$ [and $K_Z$ is as in equation (\ref
{eqtaub-moment})].
For this, define
\[
H(s):= -\log K \bigl(\exp(-s) \bigr)
\]
and notice that $H^{(n)}(s) = -\log K^{(n)} (\exp(-s) )$,
so that we are left to prove the convergence of
the sequence $H^{(n)} (\alpha^n s)$ as $n$ goes to infinity, for
$s\geq0$.

By an abuse of the notation [from condition (\ref{eqmoment})],
assume that $K_Z(s) = s^\eta\ell(\frac1s)$ for a function $\ell$
continuous bounded and nonzero at infinity,
and define
\[
L(s) = -\log\ell\bigl(\exp(s)\bigr).
\]
By the assumptions on $\ell$, it follows that $L$ is continuous at
infinity and $L(\infty) \neq\pm\infty$,
and so for each $a>0$,
there is an $N_a$
such that for $s_1$ and $s_2$ larger than $N_a$, we have $|L(s_1) -
L(s_2)| \leq a$.
A simple induction shows that
%
%e6.4 #&#
\setcounter{equation}{1}
\renewcommand{\theequation}{\arabic{section}.\arabic{equation}}
\begin{equation}
\label{eqdefH} H^{(m)}\bigl(\alpha^ms\bigr) = s + \sum
_{k=1}^m \frac1{\alpha^{m-k}}
(-1)^k L \bigl( H^{(k-1)} \bigl(\alpha^{m-k+1}s\bigr)
\bigr).
\end{equation}

By the definition of $H$, one can easily verify that $H$ is
1-Lipschitz, that~is,
\[
\mbox{for any two $s_1, s_2\geq0$}\qquad
\bigl|H(s_1)-H(s_2)\bigr| \leq|s_1-s_2|.
\]

We now show that the sequence $\{H^{(n)}(\alpha^n s), n\in\mathbb
N\}$ is Cauchy,
proving the point-wise convergence.
The same argument shows that the
sequence is uniformly Cauchy on compact intervals of $[ 0,\infty)$,
concluding the proof of the uniform convergence.

Fix a large $m \in\mathbb N$ and note that replacing $s$ by $\alpha^{n} s$
in (\ref{eqdefH}), we get
\[
H^{(m)}\bigl(\alpha^{n+m}s\bigr) = \alpha^n s +
\sum_{k=1}^m \frac1{\alpha^{m-k}}
(-1)^k L \bigl( H^{(k-1)} \bigl(\alpha^{m-k+1+n}s\bigr)
\bigr).
\]

We claim that as $n$ goes to infinity each term $H^{(k-1)} (\alpha
^{m-k+1+n}s)$ tends to infinity.
Indeed, more precisely, the rate of convergence to infinity of this
term is as
$\alpha^{n+m-2k+2}s + O(1)$; this can be shown by a simple induction
from (\ref{eqdefH}), using
the bounded continuity of $L$ at infinity.

For two fixed $m$ and $M$, we have
\begin{eqnarray*}
&&
\bigl|H^{(m)}\bigl(\alpha^{n+m}s\bigr) - H^{(M)}\bigl(
\alpha^{n+M}s\bigr)\bigr| \\[-2pt]
&&\qquad= \Biggl|\sum_{k=1}^m
\frac1{\alpha^{m-k}} (-1)^k L \bigl(H^{(k-1)}
\bigl(\alpha^{m-k+1+n}s\bigr) \bigr)
\\[-2pt]
&&\qquad\quad\hspace*{2pt}{}- \sum_{k=1}^M \frac1{
\alpha^{M-k}} (-1)^k L \bigl(H^{(k-1)} \bigl(
\alpha^{M-k+1+n}s\bigr) \bigr) \Biggr|.
\end{eqnarray*}
For $n$ large enough, we can assume that each term $L (H^{(k-1)}
(\alpha^{(m-k+1+n)}s) )$ differs from
$L(\infty)$ by an arbitrary small positive number $a$. It follows then
\begin{eqnarray*}
&&\bigl| H^{(m)}\bigl(\alpha^{n+m}s\bigr) - H^{(M)}\bigl(
\alpha^{n+M}s\bigr) \bigr| \\[-2pt]
&&\qquad\leq a \Biggl[ \sum_{k=1}^m
\frac1{\alpha^{m-k}} +\sum_{k=1}^M
\frac1{\alpha^{M-k}} \Biggr]
\\[-2pt]
&&\qquad\quad{}+ \Biggl|\sum_{k=1}^m \frac1{
\alpha^{m-k}} (-1)^k L(\infty) - \sum
_{k=1}^M \frac1{\alpha^{M-k}}
(-1)^k L (\infty) \Biggr|.
\end{eqnarray*}
Since $\alpha>0$ and $L(\infty) <\infty$, and $a$ can be chosen
arbitrarily small, obviously
the right term of the above inequality can be made
arbitrarily small, provided that $n$ is sufficiently large and the
constants $m$ and $M$ are large enough.
We conclude that for any $a>0$, there exist integer constants $N_a$ and
$M_a$ such that
\begin{eqnarray*}
\bigl|H^{(n+m)}\bigl(\alpha^{n+m}s\bigr) - H^{(n+M)}\bigl(
\alpha^{n+M}s\bigr) \bigr| &\leq& \bigl|H^{(m)}\bigl(
\alpha^{n+m}s\bigr) - H^{(M)}\bigl(\alpha^{n+M}s
\bigr) \bigr|
\\[-2pt]
&\leq& a
\end{eqnarray*}
for any $n$ larger than $N_a$, provided that $m$ and $M$ are larger
than $M_a$.
This shows that the sequence is Cauchy.\vadjust{\goodbreak} In the same way, we can easily prove
that the sequence is uniformly Cauchy on compact subsets of $[
0,\infty)$.
This shows the existence of a continuous limit $w$ for the sequence
$H^{(n)}(\alpha^ns)$.

We now show that $w$ is strictly increasing and $w(\infty) =\infty$.
For this, note that for $s_1 <s_2$,
the above arguments show that for large enough $m$ and~$n$,
one has $H^{(m)}(\alpha^{n+m}s_i) = \alpha^n s_i + O(1)$.
In particular, for $n$ large enough constant and for all $m$,
$H^{(m)}(\alpha^{n+m}s_2) - H^{(m)}(\alpha^{n+m}s_1) > \frac12
\alpha^n(s_2-s_1)$.
Since $H$ is itself strictly increasing, and so $H^{(n)}$ is,
one concludes that the limit $w$ is strictly increasing. A~similar
argument shows that $w(\infty) =\infty$.

Finally, we observe that $w(0^+) = -\log(1-q)$.
This follows from a simple fixed point argument: fix an $s>0$ and note that
\begin{eqnarray*}
w(0+)&=&\lim_{m\rightarrow\infty} w \bigl(\alpha^{-m}s\bigr) =
\lim_{m
\rightarrow\infty} \lim_{n\rightarrow\infty} H^{(m)}H^{(n-m)}\bigl(
\alpha^{n-m}s\bigr)
\\
&=&\lim_{m\rightarrow\infty} H^{(m)}\bigl(w(s)\bigr)
\end{eqnarray*}
by the continuity of $H^{(m)}$ for each fixed $m$.

Since $H^{(m)}(w(s))=-\log K_Z^{(m)}(\exp(-w(s)))$ and $w(s) \geq0$,
it follows easily that for each $s>0$, when $m$ goes to infinity,
$H^{(m)}(w(s))$ tends to the unique finite fixed point of $H$.
This is $-\log(1-q)$, a consequence of the corresponding statement for
$K^{(m)}$ given that the unique fixed point of $K_Z$ in $(0,1)$ is $1-q$.

These then allow us to conclude the proof of the above convergence result
by first proceeding as in Darling~\cite{Dar70} to obtain the
convergence in distribution, and next by applying the result of
Cohn~\cite{Coh77} to obtain the almost sure convergence.

To conclude the proof of the lemma, note that for two constants
$\delta, \Delta>0$, $\delta< \Delta$, the event
\[
E_{\delta,\Delta}:= \bigl\{ \delta(1+\varepsilon)^n \leq\log
(Z_n+1) \leq\Delta(1+\varepsilon)^n \mbox{ for large
enough $n$} \bigr\}
\]
happens with a probability tending to $1-q$ as $\delta\rightarrow0$
and $\Delta\rightarrow\infty$. For two fixed
constants $\delta$ and $\Delta$, we have
for $r$ large enough, $(1+\varepsilon)^{-r} \leq\delta$ and
$(1+\varepsilon)^r \geq\Delta$. This shows that
the event $E_{\delta,\Delta}$ is contained in the event $E_r$ for $r$
sufficiently large, and the lemma follows.
\end{pf*}

%s7 #&#
\section{Conclusion}\label{secconclusion}

We have proved the equivalence of $\Expl(Z)$ and $\MinSum(Z)$ for
plump offspring distributions $Z$, and shown that the plumpness
condition is essentially best possible, in terms of conditions of
the form $F_Z(1-1/m) \geq g(m)$. However, this is very far from being a
characterization of all offspring distributions for which explosion and
min-summability are equivalent. %the equality $\Expl(\gwd)=\MinSum(
For example, a~simple adaptation of the proof of the Equivalence
theorem shows that $\Expl(Z) = \MinSum(Z)$ for $Z$ defined by
\[
\mathbb{P} \biggl\{Z\ge m\exp\biggl(\exp\biggl(\log\log{m}-\sqrt{\log
\log{m}}+\frac{1}{2}\log\log\log{m} \biggr) \biggr) \biggr\} =
\frac{1}{m}.\vadjust{\goodbreak}
\]
The function
\[
f(n)=e^{e^{\log^2{n}}}
\]
is a speed of $Z$. This illustrates that the equivalence can occur for
distributions with speeds very much slower than doubly exponential.
By contrast, any plump distribution has a speed that grows at least as
fast as a double exponential.

We remark that the above example is extremely close to best
possible. It follows from Proposition~\ref{propsuffcond} that the
equivalence cannot hold for an offspring distribution which has a speed
of the form
\[
f(n)=e^{e^{o(\log^2{n})}}.
\]

We do not know how general the equivalence of $\Expl(Z)$ and $\MinSum
(Z)$ should be when $Z$ has speed slower than doubly exponential.
Obtaining a complete characterization of offspring distributions where
equivalence occurs remains an interesting open question.

%The fourth author was partially supported by NSF grant CCF1115849.

% zodis "Acknowledgments" paliekamas pagal autoriu
\section*{Acknowledgments}

The paper was conceived during two consecutive meetings at the Bellairs
Institute in Barbados in 2008 and 2009. Discussions with Ralph
Neininger regarding the limit law of $M_{n}$ are gratefully
acknowledged. We are grateful to the anonymous referees for their
constructive comments and suggestions which helped to significantly
improve the presentation of this paper.
We thank Vladimir Vatutin for making us aware of the references \cite
{Sev67,Sev70,Gel67,Vat76,Vat87,Vat96,VZ93}.
We thank Louigi Addario-Berry for helping to clarify an issue in the literature.

%suskaldyti doi

% imsref loaded by lrinkeviciute, 2013-02-19 10:44:13

\printaddresses

\end{document}